\newtheoremstyle{plain}%
    {8pt plus2pt minus4pt}%
    {8pt plus2pt minus4pt}%
    {\itshape}%
    {}%
    {\bfseries\scshape}%
    {}%
    {6pt}
    {}%
\@date \else {\vskip3ex \centering\footnotesize\@date\par\vskip1ex}\fi
\else \@footnotetext{\@setdate}\fi}
\begin{document}
\newcommand{\weighted}[1]{\mbox{2-WEIGHTED}^{{(#1)}}}
\newcommand{\Whp}{{\textit{W.h.p. }}}
\newcommand{\whp}{{\textit{w.h.p. }}}
\newcommand{\bin}{\textrm{Bin}}
\newcommand{\po}{\textrm{Po}}
\newcommand{\codeg}{\textrm{codeg}}
\newcommand{\E}{\mathrm{E}}
\newcommand{\V}{\mathrm{Var}}
\newcommand{\G}{\mathbb{G}}
\newcommand{\RT}{\textup{\textbf{RT}}}
\newcommand{\f}{\textup{\textbf{f}}}
\newcommand{\z}{\textrm{\textbf{z}}}
\newcommand{\ex}{\textrm{\textbf{ex}}}
\newcommand{\sqbs}[1]{\left[ #1 \right]}
\newcommand{\of}[1]{\left( #1 \right)}
\newcommand{\bfrac}[2]{\of{\frac{#1}{#2}}}
\renewcommand{\l}{\ell}
\newcommand{\sm}{\setminus}
\newcommand{\mc}[1]{\mathcal{#1}}
\newcommand{\h}[1]{\mathbb{H}^{({#1})}}
\newcommand{\hh}{\mathbb{H}}
\newcommand{\g}{\mathbb{G}}
\newcommand{\dist}{\mathrm{dist}}
\newcommand{\la}{\lambda}
\renewcommand{\k}{\kappa}
\newcommand{\eps}{\epsilon}
\renewcommand{\P}{\mathbb{P}}

\newcommand{\rbrac}[1]{\left( #1 \right)}
\newcommand{\rfrac}[2]{\rbrac{\frac{#1}{#2}}}
\newcommand{\sbrac}[1]{\left[ #1 \right]}
\newcommand{\cbrac}[1]{\left\{ #1 \right\}}

\newtheorem*{theorem*}{Theorem}
\newtheorem{theorem}{Theorem}[section]
\newtheorem{lemma}[theorem]{Lemma}
\newtheorem{definition}[theorem]{Definition}
\newtheorem{conjecture}[theorem]{Conjecture}
\newtheorem{proposition}[theorem]{Proposition}
\newtheorem{claim}[theorem]{Claim}
\newtheorem{fact}[theorem]{Fact}
\newtheorem{corollary}[theorem]{Corollary}
\newtheorem{observation}[theorem]{Observation}
\newtheorem{problem}[theorem]{Open Problem}

\title[]{On The Number of Alternating Paths in Random Graphs}
\author{Patrick Bennett, Ryan Cushman, Andrzej Dudek}
\address{Department of Mathematics, Western Michigan University, Kalamazoo, MI}
\email{\tt \{patrick.bennett,\;ryan.cushman,\;andrzej.dudek\}@wmich.edu}
\thanks{The first author was supported in part by Simons Foundation Grant \#426894 and by funds from the Faculty Research and Creative Activities Award, Western Michigan University.
The third author was supported in part by Simons Foundation Grant \#522400.}

\begin{abstract}

In the noisy channel model from coding theory, we wish to detect errors introduced during transmission by optimizing various parameters of the code. Bennett, Dudek, and LaForge framed a variation of this problem in the language of alternating paths in edge-colored complete bipartite graphs in 2016. Here, we extend this problem to the random graph $\G(n,p)$. We seek the \emph{alternating connectivity}, $\kappa_{r,\ell}(G)$, which is the maximum $t$ such that there is an $r$-edge-coloring of $G$ such that any pair of vertices is connected by $t$ internally disjoint and alternating (i.e. no consecutive edges of the same color) paths of length~$\ell$. 
We have three main results about how this parameter behaves in $\G(n,p)$ that basically cover all ranges of~$p$: one for paths of length two, one for the dense case, and one for the sparse case. For paths of length two, we found that $\kappa_{r,\ell}(G)$ is essentially the codegree of a pair of vertices. For the dense case when $p$ is constant, we were able to achieve the natural upper bounds of minimum degree (minus some intersection) or the total number of disjoint paths between a pair of vertices. For the sparse case, we were able to find colorings that achieved the natural obstructions of  minimum degree or (in a slightly less precise result) the total number of paths of a certain length in a graph. We broke up this sparse case into ranges of $p$ corresponding to when $\G(n,p)$ has diameter $k$ or $k+1$. We close with some remarks about a similar parameter and a generalization to pseudorandom graphs.

\end{abstract}

\date{\today}

\maketitle


\section{Introduction}

\noindent An alternating path is a path with adjacent edges having distinct colors. Originally studied by Bollob\'as and Erd\H{o}s~\cite{BE} in the context of finding alternating Hamiltonian cycles in complete graphs, alternating paths have been widely studied~(e.g.~\cite{AFR,AG, JG, CD, DFR, S}). Along with being interesting objects in their own right, alternating paths can also be used to encapsulate certain parameters of codes in coding theory.

One such fundamental parameter of a code $C\subseteq [r]^m$ is the minimum \emph{Hamming distance} between codewords $\mathbf{x}$ and $\mathbf{y}$ in $C$. This distance between codewords is defined as the number of positions where they differ: 
\[
\dist(\mathbf{x}, \mathbf{y}) = \big{|} \{ i: 1\le i\le m,\ \mathbf{x}(i)\neq \mathbf{y}(i)\} \big{|}.
\]

\noindent In the noisy channel model of coding theory, we wish to detect errors introduced during transmission by optimizing various parameters of the code. Codes with large minimum Hamming distance between codewords allow for greater detection and correction of errors. At the same time, one wants to be able to send many messages across this channel. Thus, one may wish to ask what the maximum $n$ is such that there exists a code $C\subseteq [r]^m$ of $n$ codewords and minimum Hamming distance $t$. We call this number $\alpha_r(m, t)$. (See~\cite{P} for more details concerning coding theory.)

We now encode this problem in terms of alternating paths. Let $K_{m,n}$ be a complete bipartite graph on vertex set $[m]\cup [n]$ and $c: E(K_{m,n}) \rightarrow [r]$ be an $r$-edge-coloring of $K_{m,n}$ with the property that every pair of vertices in~$[n]$ is connected by at least~$t$ alternating paths of length 2 (with 3 vertices). We can represent this coloring as a collection of $n$ vectors of length $m$ with entries in $[r]$ in the following way: for a vertex $v \in [n]$, define the vector $\mathbf{c}_v\in [r]^m$ by $\mathbf{c}_v(u) = c(\{v,u\})$ for $u \in [m]$. Then $C = \{\mathbf{c}_v : v\in [n] \}$ fully encodes the edge coloring of $K_{m,n}$ since every edge will belong to a unique vector which will determine its color. 

But if we take a path of length 2 of the form $v-u-w$ for $v,w \in [n]$, notice that this is alternating if and only if $c(\{v,u\}) \neq c(\{u,w\})$, that is, $\mathbf{c}_v(u) \neq \mathbf{c}_w(u)$. So the number of alternating paths between $v$ and $w$ is exactly the Hamming distance between $\mathbf{c}_v$ and $\mathbf{c}_w$. And since $c$ has the property that every pair of vertices in the same partite set is connected by at least $t$ alternating paths of length 2, it follows that $C$ has minimum Hamming distance $t$. Therefore we may conclude that $|C| = n \le \alpha_r(m,t)$.  

So determining $\alpha_r(m, t)$ is equivalent to finding the largest $n$ such that there is an $r$-coloring of the edge set of $K_{m, n}$ with the property that any pair of vertices in~$[n]$ has at least $t$ alternating paths of length $2$ connecting them. Also notice that all such paths are internally disjoint. 

Bennett, Dudek and LaForge~\cite{BDL} used this characterization to study a slightly different problem. Instead of fixing the alphabet, word length, and $t$ and asking for the largest possible code, they fixed the alphabet, word length, and code size and asked for the largest possible $t$. In addition, they generalized the length of the alternating paths from $2$ to any constant length $2k$. Their object of study was $\kappa_{r,2k}(m,n)$, the maximum $t$ such that there is an $r$-coloring of the edges of $K_{m,n}$ such that any pair of vertices in class of size~$n$ is connected by $t$ internally disjoint and alternating paths of length~$2k$.

They showed that for fixed $r\ge2$ and $n\ge m\gg \log n$
\[
\kappa_{r,2}(m,n) \sim \left( 1-\frac{1}{r}\right) m
\]
(see the last paragraph of this section regarding the notation $\gg$ and $\sim$).
They also proved that for any fixed $r\ge 2$, $k\ge 2$ and $n\ge m\gg 1$
\[
\kappa_{r,2k}(m,n) \sim  \frac{m}{k}.
\]

Besides these results, the authors also proposed the concept of alternating connectivity, of which this paper is mainly concerned. 
This parameter was in part inspired by the work of Espig, Frieze, and Krivelevich \cite{EFK}, who found conditions under which a random graph with randomly two-colored edges has an alternating path joining any pair of vertices.
\emph{Alternating connectivity}
$\kappa_{r,\ell}(G)$ was defined
to be the maximum $t$ such that there is an $r$-edge-coloring of $G$ such that any pair of vertices is connected by $t$ internally disjoint and alternating paths of length~$\ell$.

For complete graphs, it was shown that 
\[
\kappa_{r,2}(K_n)\sim (1-1/r) n
\quad\text{ and }\quad
\kappa_{r,\ell}(K_n)\sim n/(\ell-1)
\]
for any $r\ge 2$ and $\ell\ge 3$. Motivated by this progress, in this paper we study 
the alternating connectivity of the random graph $\G(n,p)$.

We have three main results concerning this parameter for $\G(n, p)$. The first result is concerned with the case when we seek paths of length two. Here, our answer is essentially the same as the expected codegree between two vertices in $\G(n, p)$. 
\begin{theorem} \label{thm:kappa_r2}
Let $p \gg \sqrt{\log n/n}$ and $G = \G(n,p)$ and let $r$ be an integer. Then, \whp
\[
\kappa_{r,2}(G) \sim \left( 1-\frac{1}{r}\right) np^2.
\]
\end{theorem}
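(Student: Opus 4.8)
The plan is to exploit the fact that for paths of length two the alternating condition is trivial: a path $u$--$v$--$w$ of length two is alternating precisely when the edges $\{u,v\}$ and $\{v,w\}$ receive different colors, and any two such paths between a fixed pair $u,w$ are automatically internally disjoint, since each is determined by its common middle vertex $v$. Hence, for a fixed coloring $c$, the number of internally disjoint alternating paths between $u$ and $w$ is exactly the number of common neighbors $v$ with $c(\{u,v\})\neq c(\{v,w\})$, and $\kappa_{r,2}(G)$ is the maximum over colorings $c$ of the minimum over pairs $u,w$ of this quantity. I would first record the two deterministic properties of $G=\G(n,p)$ that hold \whp under the hypothesis $p\gg\sqrt{\log n/n}$ (equivalently $np^2\gg\log n$): every degree satisfies $d_v=(1+o(1))np$, and every codegree satisfies $\codeg(u,w)=(1+o(1))np^2$. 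Both follow from Chernoff bounds together with a union bound over the $n$ vertices, respectively the $\binom{n}{2}$ pairs, which converges precisely because $np^2\gg\log n$.

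For the upper bound I would use a convexity-and-averaging argument. Fix any coloring $c$. Each alternating length-two path has a unique middle vertex, so the total number of alternating paths, summed over all pairs, equals $\sum_v A_v$, where $A_v$ is the number of pairs of neighbors of $v$ joined to $v$ by differently colored edges. If the edges at $v$ split into color classes of sizes $d_{v,1},\dots,d_{v,r}$, then $A_v=\binom{d_v}{2}-\sum_i\binom{d_{v,i}}{2}$, and by convexity of $x\mapsto\binom{x}{2}$ one has $\sum_i\binom{d_{v,i}}{2}\geq r\binom{d_v/r}{2}$, giving $A_v\leq \tfrac12(1-1/r)d_v^2$. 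Summing and using $d_v=(1+o(1))np$ yields at most $(1+o(1))\tfrac12(1-1/r)n^3p^2$ alternating paths in total; dividing by $\binom{n}{2}=(1+o(1))n^2/2$ shows the average number of alternating paths per pair is at most $(1+o(1))(1-1/r)np^2$. Since the minimum over pairs is at most the average, $\kappa_{r,2}(G)\leq(1+o(1))(1-1/r)np^2$ for every coloring, which is the desired upper bound.

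For the matching lower bound I would exhibit a coloring by the probabilistic method: color each edge of $G$ independently and uniformly from $[r]$. Condition on a graph $G$ satisfying the codegree bound above. For a fixed pair $u,w$ with $D:=\codeg(u,w)$, the indicators that the two edges through distinct common neighbors disagree in color are independent, since they involve disjoint pairs of edges, so the number of alternating paths between $u$ and $w$ is distributed as $\bin(D,1-1/r)$ with mean $(1-1/r)D=(1+o(1))(1-1/r)np^2$. A Chernoff lower-tail bound gives that this count falls below $(1-\delta)(1-1/r)np^2$ with probability at most $\exp(-c\delta^2 np^2)$. A union bound over all $\binom{n}{2}$ pairs makes the failure probability at most $n^2\exp(-c\delta^2 np^2)$, which tends to $0$ for a suitable $\delta=\delta(n)\to0$ exactly because $np^2\gg\log n$. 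Hence \whp the random coloring certifies $\kappa_{r,2}(G)\geq(1-o(1))(1-1/r)np^2$, completing the proof.

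The step I expect to be the main obstacle is the lower bound's union bound: its success hinges on $np^2\gg\log n$, which is precisely the stated hypothesis $p\gg\sqrt{\log n/n}$, so this is where the result is sharp and where care is needed in choosing how fast $\delta\to0$. The upper bound, by contrast, is a clean convexity estimate and should present no serious difficulty beyond the routine degree concentration.
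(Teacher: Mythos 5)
Your proposal is correct and follows essentially the same route as the paper: the paper also proves the upper bound by counting all alternating paths through each vertex and applying Cauchy--Schwarz (your convexity step) before averaging over $\binom{n}{2}$ pairs, and the lower bound by a uniform random $r$-coloring, observing the count for each pair is $\bin\left((1+o(1))np^2,\,1-\frac{1}{r}\right)$, and applying Chernoff with a union bound where $np^2 \gg \log n$ is exactly what is needed. The only cosmetic difference is that the paper packages the argument as a deterministic statement (Theorem~\ref{thm:kappa_r2_dreg}) about graphs with degrees $\sim d$ and codegrees $\sim d^2/n \gg \log n$, which your preliminary degree/codegree concentration step reproduces implicitly.
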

\noindent 
Observe that if $p \ll \sqrt{\log n/n}$ then the diameter of $G$ is at least three. Therefore, in this sense the above theorem is optimal.

After this, we consider $\ell \ge 3$ by looking at a ``dense'' case and a ``sparse'' case. For the ``dense'' case, when $p$ is a constant, we have found that we have two main obstructions. Clearly, we cannot have more paths between two vertices than the minimum degree. However, the neighborhoods of two vertices \whp~have an intersection of size $\sim np^2$, so the most we could hope for here is $np(1-p/2)$. On the other hand, the total number of paths of length $\ell$ between two vertices is bounded above by $n/(\ell -1)$. 
\begin{theorem}\label{thm:kappa-dense}
Let $ 0 < p < 1$ be a constant and $G = \G(n,p)$. Then,  for any integer $\ell \ge 3$ \whp 
\[
\kappa_{r,\ell}(G) \sim \min \left\lbrace\frac{n}{\ell -1},  np\left(1-\frac{p}{2}\right)\right\rbrace.
\]
\end{theorem}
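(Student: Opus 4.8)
The two upper bounds are the routine part, and they hold for \emph{every} coloring. First I would show that \whp every vertex has degree $(1+o(1))np$ and every pair $u,v$ has $\codeg(u,v)=(1+o(1))np^2$; since $p$ is constant these are sums of $\Theta(n)$ independent indicators, so Chernoff bounds together with a union bound over the $\binom{n}{2}$ pairs suffice. Granting this, fix a pair $u,v$ and any family of $t$ internally disjoint $\ell$-paths joining them. Each path has a second vertex in $N(u)$ and a second-to-last vertex in $N(v)$, and since $\ell\ge 3$ these two are distinct; internal disjointness forces all $2t$ of them to be distinct, so $2t\le |N(u)\cup N(v)|=\deg(u)+\deg(v)-\codeg(u,v)\le(1+o(1))np(2-p)$, i.e. $t\le(1+o(1))np(1-p/2)$. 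The bound $t\le(n-2)/(\ell-1)$ is immediate from counting internal vertices. Hence \whp $\kappa_{r,\ell}(G)\le(1+o(1))\min\{n/(\ell-1),\,np(1-p/2)\}$.

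For the lower bound I would produce a good coloring by taking a uniformly random $r$-edge-coloring and arguing it works \whp. Put $t=(1-\eps)\min\{n/(\ell-1),\,np(1-p/2)\}$. Although one coloring must serve all pairs, the bad events need only be union-bounded, so it is enough to prove that for each \emph{fixed} pair $u,v$ the probability, over $G$ and the coloring jointly, of having fewer than $t$ internally disjoint alternating $\ell$-paths is $o(n^{-2})$; summing over the $\binom{n}{2}$ pairs then finishes. The construction for a fixed pair is layered: choose pairwise disjoint sets $X_1,\dots,X_{\ell-1}$, each of size $t$, with $X_1\subseteq N(u)$, $X_{\ell-1}\subseteq N(v)$, and the middle layers taken by a fixed rule from the leftover vertices, and then build the paths $u\,x_1\,x_2\cdots x_{\ell-1}\,v$ by chaining perfect matchings $M_i$ between consecutive layers $X_i,X_{i+1}$. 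Such a chain automatically produces $t$ internally disjoint paths, because the layers are disjoint and each $M_i$ is a perfect matching. The selection is feasible, if tight: since $2t\le(1-\eps)|N(u)\cup N(v)|$ one places $N(u)\setminus N(v)$ in $X_1$, $N(v)\setminus N(u)$ in $X_{\ell-1}$, and splits the common neighborhood between the two using the $\eps$-slack, while $t(\ell-1)\le(1-\eps)n$ leaves room for the middle layers.

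The alternating condition is enforced while building the matchings greedily from $u$ towards $v$: knowing $c(ux_1)$ I pick $M_1$ using only edges of $G[X_1,X_2]$ whose color differs, at each left endpoint, from $c(ux_1)$; knowing the resulting $c(x_1x_2)$ I pick $M_2$ avoiding it, and so on, with the final matching $M_{\ell-2}$ additionally avoiding, at each right endpoint $x_{\ell-1}$, the color of the already-fixed edge $x_{\ell-1}v$. The point that makes this work is a clean independence: the edge sets $G[X_i,X_{i+1}]$ are pairwise disjoint and disjoint from the edges meeting $u$ or $v$, so every color forbidden while constructing $M_i$ is determined by edges lying \emph{outside} $G[X_i,X_{i+1}]$. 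Conditioned on everything built so far, the colors inside $G[X_i,X_{i+1}]$ are thus still independent and uniform, and (using that edge-presence in $\G(n,p)$ is independent of the coloring and of $N(u),N(v)$) the set of allowed edges is distributed as a random bipartite graph with parts of size $t$ and edge probability $q\ge p(r-2)/r=\Theta(1)$ for $r\ge 3$. A perfect matching then exists with failure probability $e^{-\Theta(n)}$, so the $\ell-2$ matchings together with the degree/codegree conditioning all succeed with probability $1-o(n^{-2})$, as needed.

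The heart of the argument is precisely this independence bookkeeping: routing the paths through disjoint layers decouples each matching's color constraint from the colors it depends on, and it is what lets a \emph{single} deterministic choice of layers succeed with the exponentially small failure probability the union bound requires --- because the layers are fixed given $G$, there is no $2^{\Theta(n)}$ loss from ranging over all possible layer sets. I expect the main difficulties to be two. First, the regime where $np(1-p/2)$ is the minimum, since there the choice of $X_1,X_{\ell-1}$ inside $N(u),N(v)$ is essentially tight and the common neighborhood must be apportioned carefully (this is where the $\eps$-slack is genuinely used). Second, the case $r=2$, where alternation forces the colors along a path to follow one of two global parity patterns, so each $M_i$ must be monochromatic of a predetermined color; there I would instead split the endpoint layers according to the two patterns and find monochromatic perfect matchings inside the color classes, which still have linear density and hence matchings with failure probability $e^{-\Theta(n)}$.
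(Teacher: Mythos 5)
Your proposal is correct, and while it shares the paper's skeleton (identical upper bounds via $|N(u)\cup N(v)|$ and vertex-counting; a random coloring; disjoint layers $X_1\subseteq N(u)$, $X_{\ell-1}\subseteq N(v)$ plus middle layers, chained by matchings), the engine driving the lower bound is genuinely different. The paper always uses just \emph{two} random colors and two parallel chains of layers ($W_i$ and $Z_i$) in which the color of every matching is predetermined by its position, so each required matching is monochromatic; existence then comes from Lemma~\ref{lem:matchings}, a \emph{universal} statement proved once by a union bound over all pairs of linear-sized sets, which lets the layers be chosen adaptively --- even depending on the coloring, as they must be for the endpoint classes built from $N_{RB}(x,y)$ etc.\ via Lemma~\ref{lem:sizes} --- with no conditioning concerns. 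You instead run a per-pair exposure argument: layers measurable with respect to the edges at $u,v$, interior colors still i.i.d.\ uniform given everything revealed, and a greedy color-avoiding matching step in which the allowed subgraph of $G[X_i,X_{i+1}]$ is an honest random bipartite graph with edge probability at least $p(r-2)/r=\Theta(1)$, so each perfect matching fails with probability $e^{-\Theta(n)}$ and only $\binom{n}{2}$ events need union-bounding. This trades the paper's $2^{O(n)}$-union-bound robustness for finer conditional bookkeeping, and your accounting is sound; one small correction is that for $r=2$ (and for the split of the common neighborhood generally) the endpoint layers depend on the colors at $u$ and $v$, not on $G$ alone --- harmless, since those edges are vertex-disjoint from the interior bipartite edge sets, but ``layers fixed given $G$'' is slightly overstated. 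Your $r=2$ fallback with two predetermined parity patterns and monochromatic matchings is precisely the paper's two-chain construction, including the apportioning of $N(u)\cap N(v)$ by ordered color pairs that Lemma~\ref{lem:sizes} quantifies. Finally, your one-stage construction with $(1-\eps)$-slack followed by $\eps\to 0$ is simpler than the paper's two-stage argument in Lemma~\ref{lem:kappa_2ell:c1}, which first routes paths through the non-neighborhood $S$ with middle layers of size $s/(2(\ell-3))$ and then a second batch through the leftover neighbors $U'$ in order to hit $n/(\ell-1)$ on the nose; since $\sim$ is itself an $\eps$-statement, nothing is lost, and your greedy variant has the mild extra feature of genuinely exploiting $r\ge 3$ colors, which the paper never needs.
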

Finally, we consider the ``sparse'' case when $p = o(1)$. Here, we have two main obstructions: in parts \eqref{thm:sparse:ii} and  \eqref{thm:sparse:i}, we are limited by the minimum degree; in part \eqref{thm:sparse:iii} we are limited by the total number of paths of length $k$. Here, we analyze $\kappa_{r,\ell}(G)$ in based on how close we are to the threshold for the diameter in $\G(n,p)$. 

\begin{theorem}\label{thm:kappa-sparse}
Suppose $G = \G(n,p)$ with $p=o(1)$ and $r\ge 2$ is an integer.
\begin{enumerate}[(i)]
\item\label{thm:sparse:ii} Let $k\ge 2$ be a positive integer such that $n^{1/k} \le np \le n^{1/(k-1)}$. If $\ell \ge k+2$, then 
\whp~we have $\kappa_{r,\ell}(G)\sim np$.
\item\label{thm:sparse:i} Let $k\ge 2$ be a positive integer such that $(n\log n)^{1/k} \ll np \le n^{1/(k-1)}$. If $\ell = k+1$, then 
\whp~we have $\kappa_{r,\ell}(G)\sim np$.
\item\label{thm:sparse:iii} Let $k\ge 3$ be a positive integer such that $(n\log n)^{1/k} \ll np \ll n^{1/(k-1)}$. If $\ell = k$, then 
\whp~we have $\kappa_{r,\ell}(G)= \Theta(n^{k-1}p^k)$.
\end{enumerate}
\end{theorem}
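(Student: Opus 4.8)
The plan is to treat the two obstructions separately and to organize everything around the diameter of $G$. In all three regimes $np\gg\log n$, so \whp the minimum degree is $(1+o(1))np$; moreover in parts~\eqref{thm:sparse:i} and~\eqref{thm:sparse:iii} one has $\mathrm{diam}(G)=k$, while in part~\eqref{thm:sparse:ii} one has $\mathrm{diam}(G)\in\{k,k+1\}$. The upper bounds are immediate from these facts. For parts~\eqref{thm:sparse:ii} and~\eqref{thm:sparse:i}, any vertex $v$ of minimum degree meets at most $\deg(v)=\delta(G)$ internally disjoint paths of any type, so $\kappa_{r,\ell}(G)\le\delta(G)=(1+o(1))np$ for every $r$-coloring. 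For part~\eqref{thm:sparse:iii}, the expected number of length-$k$ paths between a fixed pair is $(1+o(1))n^{k-1}p^k$; since an internally disjoint alternating length-$k$ path is in particular a length-$k$ path, and the minimum over pairs is at most the average, a first-moment (Markov) argument gives $\kappa_{r,\ell}(G)=O(n^{k-1}p^k)$ \whp.

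For the lower bounds I would fix $G$ and then apply a uniformly random $r$-edge-coloring, which reduces matters to a few deterministic properties of the colored graph that hold \whp and can be union-bounded over pairs of vertices. Since edge-disjoint paths are colored independently, a fixed length-$L$ path is alternating with probability $(1-1/r)^{L-1}$. The properties I need are: (P2) for every pair $(x,y)$ the number of alternating length-$(\ell-1)$ paths from $x$ to $y$ is $(1+o(1))\,\gamma_\ell\,n^{\ell-2}p^{\ell-1}$ for the appropriate constant $\gamma_\ell>0$; and (P3) the uniformity bound that no vertex lies on more than $O(n^{\ell-3}p^{\ell-1})$ such paths. The hypotheses $\ell\ge k+1$ in~\eqref{thm:sparse:i} and $\ell\ge k+2$ in~\eqref{thm:sparse:ii} are exactly the statement that $\ell-1\ge\mathrm{diam}(G)$, which is what makes these length-$(\ell-1)$ path counts positive and concentrated for all pairs.

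Given (P2)--(P3), I would prove parts~\eqref{thm:sparse:ii} and~\eqref{thm:sparse:i} by a greedy saturation of the neighborhood of $u$. Fixing $u,v$ and processing the $\sim np$ neighbors $a$ of $u$ one at a time, I extend the edge $ua$ to an alternating length-$\ell$ path by attaching an alternating length-$(\ell-1)$ path from $a$ to $v$ whose first edge differs in color from $c(\{u,a\})$ and which avoids the set $U$ of internal vertices (including already-used neighbors of $v$) consumed by earlier paths. At every step $|U|=O(np)$, so by (P3) at most $O(np)\cdot n^{\ell-3}p^{\ell-1}$ of the $\Theta(n^{\ell-2}p^{\ell-1})$ paths counted in (P2) are blocked; the ratio of blocked to available is $O(p)=o(1)$, so a valid extension always survives and the greedy saturates all but $o(np)$ neighbors, yielding $(1-o(1))np$ internally disjoint alternating paths between every pair. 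For part~\eqref{thm:sparse:iii} the roles reverse: now $\ell=k=\mathrm{diam}(G)$, the total number $N\sim n^{k-1}p^k$ of length-$k$ paths lies far below $np$, and the goal is only $\Omega(N)$. After the random coloring a constant fraction of these shortest paths are alternating, and the key point is that the expected number of pairs of length-$k$ $u$--$v$ paths sharing an internal vertex is $O(n^{2k-3}p^{2k})$, so the average number of paths conflicting with a given one is $O(n^{k-2}p^k)=o(1)$ throughout $(n\log n)^{1/k}\ll np\ll n^{1/(k-1)}$ (this is where $k\ge3$ enters). Hence almost all alternating shortest paths are internally disjoint from every other, and deleting the few conflicting ones leaves $\Omega(n^{k-1}p^k)$ internally disjoint alternating paths between every pair.

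The main obstacle is establishing (P2), the conflict-pair bound, and the length-$k$ path counts with a failure probability small enough to survive the union bound over all $\binom{n}{2}$ pairs. These quantities are low-degree polynomials in the edge indicators whose summands are not independent, so I would control them with a polynomial concentration inequality (or a tailored martingale/switching argument), using crucially that in each regime the relevant expectation exceeds $\log n$, which lets the error probability be pushed below $n^{-2}$. A secondary but important point is to phrase (P2)--(P3) as uniform statements valid for \emph{all} small vertex sets $U$ simultaneously, rather than for an adaptively chosen $U$; this decouples the coloring from the order in which the paths are built and lets the greedy run deterministically. Making this uniform-in-$U$ formulation quantitatively tight near the diameter threshold is the most delicate part of the argument.
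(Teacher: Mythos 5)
Your upper bounds match the paper's in substance (one nit: Markov alone gives the total length-$k$ path count $\le Cn^{k+1}p^k$ only with probability $1-1/C$, not \whp; the paper instead uses \whp degree concentration, $|N(w)|\sim np$ for all $w$, to bound the path count deterministically). The lower-bound scheme, however, has a genuine gap, and it sits exactly where you flagged ``the most delicate part.'' First, (P3) is quantitatively wrong for the vertices that matter: $n^{\ell-3}p^{\ell-1}$ is the \emph{average} number of $a$--$v$ paths through a typical vertex, but a vertex $w\in N(v)$ lies on $\Theta(n^{\ell-3}p^{\ell-2})$ such paths --- a factor $1/p$ more, i.e.\ a $\Theta(1/np)$ fraction of the whole pool --- and every step of your greedy burns one neighbor of $v$. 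So after $t$ steps the blocked fraction is $\Theta(t/np)$, not $O(p)$, and it tends to $1$ precisely as you approach the saturation $(1-o(1))np$ you need; the greedy as analyzed does not close. (In large parts of regimes (\ref{thm:sparse:ii}) and (\ref{thm:sparse:i}) the stated per-vertex bound is even vacuously false: e.g.\ for $\ell=k+1$ and $np=(n\log n)^{1/k}$ one has $n^{\ell-3}p^{\ell-1}=\Theta(\log n/n)=o(1)$, yet many vertices lie on at least one path.) Second, the proposed repair --- (P2)--(P3) uniform over all sets $U$ of size $\Theta(np)$ --- is not just delicate but unattainable: an adversarial $U$ can contain $N(a)$ or $N(v)$ outright, making the avoiding-path count zero, so no uniform lower bound holds; and even restricted versions cannot survive a union bound over $\binom{n}{\Theta(np)}=e^{\Theta(np\log n)}$ sets, since the count vanishes already on the event that $a$ has no correctly colored neighbor outside $U$, of probability $e^{-O(np)}$. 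Dropping uniformity returns you to an adaptive $U$, which destroys the independence your random-coloring computations assume. A parallel problem occurs in (\ref{thm:sparse:iii}): per-pair Markov for the conflict count fails with probability $\Theta((np)^k/n^2)$, whose union over $\binom n2$ pairs is $(np)^k\gg 1$; you would need an upper-tail/deletion argument (\`a la R\"odl--Ruci\'nski) that the proposal does not supply.

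It is instructive that the paper's proof never proves concentration of path counts at all. It grows vertex-disjoint alternating trees from $u$ and from $v$ by iterated $d$-matchings with $d=np/8$ (Lemmas \ref{lem:nbhs} and \ref{lem:nbhs_2}): the Hall-type events there concern neighborhood sizes $|N(S)|$, binomial with mean $\Theta(|S|np)$ growing linearly in $|S|$, which is exactly the kind of ``uniform over sets'' statement a union bound \emph{can} afford --- in contrast to uniform path counts. Internal disjointness is then automatic by construction rather than enforced by conflict-avoidance, and the paths are closed in one shot by an almost perfect matching in an auxiliary bipartite graph $H(X,Y)$ on leaf-classes (Lemma \ref{lem:omega}), or, in part (\ref{thm:sparse:iii}), by a single binomial count of edges from the leaf-classes to $v$. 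If you want to salvage your scheme, the viable repair is essentially to rebuild this tree-plus-matching architecture rather than to pursue concentration of path counts avoiding adaptively chosen sets.
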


It is important to recall here the following result (Corollary 10.12 in~\cite{B}) that asserts that \whp~$\G(n, p)$ has diameter $k$ if $(np)^{k}/ n - 2 \log n \rightarrow \infty$ and $(np)^{k-1}/n - 2\log n \rightarrow -\infty$. Thus, when $p$ is in the range of part~\eqref{thm:sparse:i} and \eqref{thm:sparse:iii} \whp~$\G(n, p)$ has diameter $k$. But for the range in \eqref{thm:sparse:ii}, we have that \whp~$\G(n, p)$ has diameter $k$ or $k+1$. Note that Theorem \ref{thm:kappa-sparse} becomes weaker as we look for paths whose length is closer to the diameter, and in particular (in part \eqref{thm:sparse:iii}) when the path length equals the diameter we are only able to estimate $\k$ up to a constant factor.

We will need some straightforward lemmas about $\G(n,p)$ which will be proved in Section \ref{sec:lemmas}. Finally, we close with some remarks about the current state of $\lambda_{r,\ell}(G)$ (where we no longer require that paths are internally disjoint) and extending our results to pseudorandom graphs.

Throughout this paper all asymptotics are taken in~$n$ unless noted otherwise. Beside of the standard \emph{Big-$O$} and \emph{Little-$o$} notation we will also write  $\sim\!\!f(n)$ instead of $(1+o(1))f(n)$. For simplicity, we do not round numbers that are supposed to be integers either up or down; this is justified since these rounding errors are negligible to the asymptomatic calculations we will make. All logarithms are natural unless written explicitly.

\section{Auxiliary results for $\G(n,p)$} \label{sec:lemmas}

Throughout the paper we will be using the following forms of Chernoff's bound (see, e.g., \cite{JLR}). Let $X\sim \bin(n,p)$ and $\mu = \E(X)$. Then, for all $0<\delta<1$
\begin{equation}\label{Chernoff_upper}
\Pr(X \ge (1+\delta) \mu) \le \exp(-\mu \delta^2/3)
\end{equation}
and
\begin{equation}\label{Chernoff_lower}
\Pr(X \le (1-\delta)\mu) \le \exp(-\mu \delta^2/2). 
\end{equation}

Several times we will also use the well-known inequalities:
\begin{equation}\label{eq:ineqs}
1-x \le e^{-x} \text{ for any $x$ and } 1-x/2 \ge e^{-x} \text{ for } 0\le x\le 1.
\end{equation}

We will rely on several straightforward lemmas about matchings in random graphs. The first one is about almost perfect matchings in the dense random graph. A similar result for bipartite random graphs (with almost identical proof) was obtained in~\cite{BDL}. 

\begin{lemma}\label{lem:matchings}
Let $0 < \alpha, p < 1$ be constants and $\G(n,p)$ be a random graph on set of vertices $V$. Then, \whp~for any subsets $A,B \subseteq V$ with $|A|=|B|=\alpha n$, there exists a matching between them of size $\alpha n(1 + o(1))$.

\end{lemma}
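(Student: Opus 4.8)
The plan is to deduce the almost-perfect matching from the deficiency version of Hall's theorem and then to rule out the relevant Hall violators by a union bound. The key observation is that a failure of the matching to be almost perfect forces a ``hole'' in $G$, i.e.\ a pair of linear-sized vertex sets with no edges between them, and for constant $p$ such a hole of size $\Omega(n^2)$ occurs with probability $e^{-\Omega(n^2)}$, which crushes the $4^n$ possible choices of the two sets. Crucially, reducing to holes removes all dependence on $A$ and $B$, so the quantifier ``for every pair $A,B$'' is handled automatically.

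First I would fix a constant $\eps>0$ (to be sent to $0$ at the end). For a fixed pair $A,B$ with $|A|=|B|=\alpha n$, view the $G$-edges between $A$ and $B$ as a bipartite graph $H$ on labeled copies of $A$ and $B$; since $G$ has no loops this is well defined even when $A\cap B\neq\emptyset$. By the deficiency form of Hall's theorem, if the maximum matching of $H$ has size less than $\alpha n-\eps n$, then there is a set $S\subseteq A$ whose neighborhood $N_H(S)\subseteq B$ satisfies $|N_H(S)|\le|S|-\eps n$. Putting $T=B\sm N_H(S)$, there are no $G$-edges between $S$ and $T$, and from $|T|=\alpha n-|N_H(S)|\ge \alpha n-|S|+\eps n$ together with $|S|,|T|\le\alpha n$ one gets $|S|,|T|\ge\eps n$.

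Hence it suffices to prove that \whp~$G$ contains no pair $S,T\subseteq V$ with $|S|,|T|\ge\eps n$ and no edges between them. For a fixed such pair, writing $W=S\cap T$, the absence of edges between $S$ and $T$ forces at least $\binom{|W|}{2}$ non-edges inside $W$ and at least $|S\sm T|\cdot|T\sm S|$ non-edges between $S\sm T$ and $T\sm S$; a short case analysis (according to whether $|W|\ge\eps n/2$) shows their sum is $\Omega(\eps^2 n^2)$. Thus by \eqref{eq:ineqs} the probability of this event is at most $(1-p)^{\Omega(\eps^2 n^2)}\le e^{-\Omega(p\,\eps^2 n^2)}$, and summing over the at most $2^n\cdot 2^n=4^n$ choices of $(S,T)$ gives a total failure probability at most $4^n\,e^{-\Omega(p\,\eps^2 n^2)}\to 0$.

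To upgrade the conclusion from deficiency $<\eps n$ to $\sim\alpha n$, I would note that the bound above tends to $0$ as long as $\eps^2\gg 1/n$, so I may rerun the argument with $\eps=\eps_n=n^{-1/4}$; this yields a matching of size at least $\alpha n-n^{3/4}=(1-o(1))\alpha n$ simultaneously for all $A,B$, which is $\alpha n(1+o(1))$ since the matching can never exceed $\alpha n$. I expect the only delicate point to be the bookkeeping when $A$ and $B$ overlap: one must check that a Hall violator still yields a hole with $\Omega(n^2)$ forced non-edges even when $S\cap T\neq\emptyset$, which the case split on $|W|$ resolves. Everything else is routine, since the crux---$e^{-\Omega(n^2)}$ beating $4^n$---is immediate once $p$ is constant and both sets are linear in $n$.
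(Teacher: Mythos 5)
Your proof is correct, and it takes a genuinely different route from the paper's. The paper fixes a pair $(A,B)$, pads the bipartite graph with complete parts $A',B'$ of size $s=\log n$ so that small Hall violators are absorbed automatically, and then only needs to exclude holes with $|S|=\log n$ and $|T|=\alpha n/2$; the resulting per-pair failure probability $2^{2\alpha n+1}(1-p)^{s\alpha n/2}=e^{-\Omega(n\log n)}$ is strong enough to survive a union bound over all $\binom{n}{\alpha n}^2$ choices of $(A,B)$, yielding deficiency only $\log n$. You instead use the defect form of Hall's theorem to convert any failure, for \emph{whatever} pair $(A,B)$, into a hole $(S,T)$ with $|S|,|T|\ge\eps n$ in $G$ itself --- an event that no longer references $A$ or $B$ --- so your union bound runs only over the $4^n$ pairs $(S,T)$ against a probability $e^{-\Omega(p\,\eps^2 n^2)}$, and the ``for all $A,B$'' quantifier is handled for free rather than paid for with the $\log n$ padding trick. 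Your approach is more elementary (no auxiliary padded graph, no exact-perfect-matching step) and is actually more careful on one point the paper glosses over: when $A\cap B\neq\emptyset$ the forced non-edges between $S$ and $T$ partially coincide, and your case split on $|S\cap T|$ correctly certifies $\Omega(\eps^2 n^2)$ distinct non-edge pairs, whereas the paper simply treats the edges between $A$ and $B$ as an independent bipartite graph $\G(|A|,|B|,p)$ (harmless in its applications, where the sets are disjoint). Note that, like the paper, you must read ``matching between $A$ and $B$'' bipartitely, with a vertex of $A\cap B$ usable once on each side --- as a vertex-disjoint matching in $G$ the statement would be false for $A=B$ --- and your labeled-copies formulation makes this explicit. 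The only cost of your route is quantitative: your choice $\eps_n=n^{-1/4}$ gives deficiency $n^{3/4}$ (and the method cannot go below roughly $n^{1/2}$, since one needs $\eps\gg n^{-1/2}$ for $p\,\eps^2n^2\gg n$), versus the paper's $\log n$; this is immaterial for the stated conclusion $\alpha n(1+o(1))$, and indeed for every later application of the lemma in the paper.
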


\begin{proof}
Fix $A,B \subseteq V$ with $|A|=|B|=\alpha n$ and consider the random binomial graph $G=\G(|A|,|B|,p)$. First we consider an auxiliary bipartite graph $H$ on $U\cup W$ such that $U=A\cup A'$, $W=B\cup B'$, $H[A\cup B] = G$, and $H[A'\cup W]$ and $H[U\cup B']$ are complete bipartite graphs. Furthermore, let $s=\log n=|A'|=|B'|$.
We show that $H$ has a perfect matching. It suffices to show that the Hall condition holds, i.e., 
\begin{equation}\label{eq:hallS}
\text{if } S\subseteq U \text{ and } |S| \le |U|/2, \text{ then } |N(S)| \ge |S|,
\end{equation}
and
\begin{equation}\label{eq:hallT}
\text{if } T\subseteq W \text{ and } |T| \le |W|/2, \text{ then } |N(T)| \ge |T|.
\end{equation}
If $|S| < s$, then since $N(S)\supseteq B'$, $|N(S)| \ge |B'|=s \ge |S|$. Therefore, we assume that $s\le |S| \le |U|/2$. Furthermore, we may assume that   $S\cap A' = \emptyset$. For otherwise, $N(S) = W$.
We will show that already for $|S|=s$, $|N(S)| \ge |W|/2=|U|/2$.

Suppose not, that is, $|N(S)| < (\alpha n + \log n)/2$. That means $|B\cap N(S)| < (\alpha n -\log n)/2$, $e(S,B\setminus N(S))=0$ and $|B\setminus N(S)|\ge (\alpha n+\log n)/2>\alpha n/2$.
Observe that the probability that there are sets $S\in A$ and $T\in B$ such that $|S|=s$ and $|T| = \alpha n/2$ and $e(S,T)=0$ is at most
\[
\binom{\alpha n}{s} \binom{\alpha n}{\alpha n/2} (1-p)^{ s  \alpha n/2}
\le 2^{2\alpha n} (1-p)^{ s  \alpha n/2}.
\]
Thus, with probability at most $2^{2\alpha n} \cdot (1-p)^{ s  \alpha n/2}$ the graph $H$ violates \eqref{eq:hallS}, and similarly~\eqref{eq:hallT}.
In other words, with probability at least $1-2\cdot 2^{2\alpha n}  (1-p)^{ s  \alpha n/2}$ the graph $H$ has a perfect matching, and consequently, there is a matching of size $\alpha n - s$ between $A$ and $B$.

Finally, by taking the union bound over all  $A\in \binom{V}{\alpha n}$ and $B\in \binom{V}{\alpha n}$ we get that the probability that there exist $A$ and $B$ such that between $A$ and $B$ there is no matching of size $\alpha n - s$ is at most
\[
\binom{n}{\alpha n} \binom{n}{\alpha n} 2^{2\alpha n+1}  (1-p)^{ s  \alpha n/2} \le 2^n \cdot 2^n  \cdot 2^{2\alpha n+1}  (1-p)^{ s  \alpha n/2} = o(1),
\]
since $s=\log n$. Also, clearly, we get that $\alpha n - s = \alpha n - o(n)$. Thus, \whp~for each $A$ and $B$ there is a matching between $A$ and $B$ of size $\alpha n - o(n)$.
\end{proof}

The next two lemmas deal with matchings with sparse random graphs. For a bipartite graph $G=(A\cup B, E)$ we say that $G$ contains a \emph{$d$-matching} from $A$ to $B$ of size $t$ if there exists in $G$  a subgraph of $t$ vertex-disjoint stars $K_{1,d}$ such that each star is centered in~$A$.

\begin{lemma}\label{lem:nbhs}
For a fixed integer $k$, let $\log n \ll np \le n^{1/(k-1)}$. Suppose the set $A$ satisfies $1 \le |A| \le  (np)^{k-2}$ and the set $B$ has order $\sim n$ and is disjoint from $A$. Consider the random bipartite graph on $A\cup B$ with edge probability $p$.  Then with probability at least $1-\frac{1}{n^3}$ there is a $d$-matching that saturates $A$ with $d = np/4$. 
\end{lemma}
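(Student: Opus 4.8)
The plan is to reduce the existence of the required system of disjoint stars to a Hall-type condition, and then to verify that condition holds with probability at least $1-n^{-3}$ using the Chernoff bound and a union bound over all subsets of $A$.

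First I would reduce to Hall's theorem by a standard ``blow-up'' argument. Replace each vertex $a\in A$ by $d$ clones, each inheriting the neighbourhood of $a$ in $B$; then a $d$-matching saturating $A$ corresponds exactly to a matching in the blown-up bipartite graph that saturates every clone. By Hall's theorem such a matching exists if and only if $|N_B(S)|\ge |S|$ for every set $S$ of clones, and since all clones of a fixed vertex share the same neighbourhood it suffices to check sets $S$ consisting of all clones of some $A'\subseteq A$. Hence it is enough to prove that, with probability at least $1-n^{-3}$,
\begin{equation}\label{eq:hall-d}
|N_B(A')|\ge d\,|A'| \quad\text{for every } A'\subseteq A,
\end{equation}
where $d=np/4$.

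To establish \eqref{eq:hall-d}, fix $A'\subseteq A$ with $|A'|=a$ and set $X=|N_B(A')|$. Since distinct vertices of $B$ are joined to $A'$ through disjoint sets of edges, $X\sim\bin\big(|B|,\,1-(1-p)^a\big)$, so that $\mu_a:=\E X=|B|\big(1-(1-p)^a\big)$, and \eqref{eq:ineqs} gives $1-(1-p)^a\ge 1-e^{-pa}$. I would then show $\mu_a\ge 2da$ in two regimes. If $pa\le 1$, then \eqref{eq:ineqs} yields $1-e^{-pa}\ge pa/2$, whence $\mu_a\ge |B|\,pa/2\sim npa/2=2da$. If $pa>1$, then $\mu_a\ge |B|(1-e^{-1})>n/2$ for large $n$ (as $|B|\sim n$), while $da=npa/4\le (np)^{k-1}/4\le n/4$ by the hypotheses $a\le (np)^{k-2}$ and $np\le n^{1/(k-1)}$; again $\mu_a\ge 2da$. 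In either case $da\le\mu_a/2$.

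Finally I would apply the lower Chernoff bound \eqref{Chernoff_lower} with $\delta=1/2$ to get $\Pr(X<da)\le\Pr(X\le\mu_a/2)\le e^{-\mu_a/8}$, and union bound over the choices of $A'$. For small sets ($pa\le 1$) I have $\mu_a\ge npa/2$ and $\binom{|A|}{a}\le |A|^a$, so the total contribution is at most $\sum_{a\ge1}\exp\big(a\log|A|-npa/16\big)$; since $\log|A|\le (k-2)\log(np)=o(np)$, each term is at most $e^{-npa/32}$ and the geometric sum is $O\big(e^{-np/32}\big)=o(n^{-3})$ because $np\gg\log n$. For large sets ($pa>1$) I have $\mu_a>n/2$, so the contribution is at most $e^{-n/16}\sum_{a}\binom{|A|}{a}=2^{|A|}e^{-n/16}$, which is $o(n^{-3})$ since $|A|\le (np)^{k-2}=o(n)$. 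Adding the two contributions shows \eqref{eq:hall-d} fails with probability at most $n^{-3}$, as required. The Hall reduction and the expectation estimates are routine; the step demanding the most care is the union bound over large subsets $A'$, where $\binom{|A|}{a}$ can be as large as $2^{|A|}$, and the decisive point is that $|A|\le (np)^{k-2}=o(n)$ makes this combinatorial factor only $e^{o(n)}$, comfortably absorbed by the $e^{-\Omega(n)}$ from Chernoff.
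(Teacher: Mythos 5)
Your proof is correct and takes essentially the same route as the paper's: reduce to the Hall-type condition $|N(A')|\ge d|A'|$ for all $A'\subseteq A$, observe $|N(A')|\sim\bin\big(|B|,1-(1-p)^{|A'|}\big)$ with success probability at least $p|A'|/2$ via \eqref{eq:ineqs}, and finish with the Chernoff bound \eqref{Chernoff_lower} plus a union bound over subsets of $A$. The only substantive difference is that your second regime $p|A'|>1$ is vacuous, since $|A'|\le (np)^{k-2}$ and $np\le n^{1/(k-1)}$ force $p|A'|\le (np)^{k-1}/n\le 1$ --- precisely the observation the paper makes to avoid any case split --- and in the remaining regime your claim $\mu_a\ge 2da$ holds only up to a $(1-o(1))$ factor from $|B|\sim n$, which is harmlessly absorbed by taking $\delta$ slightly below $1/2$ (the paper builds in this slack by using $\delta=1/3$ and concluding $(1+o(1))snp/3>snp/4$).
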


\begin{proof}
To prove this lemma, we verify that Hall's condition holds with high probability. Hence, we show that \whp~every set $S \subseteq A$ has $|N(S)| \ge d|S|$. For convenience, we will denote the orders of $S,~ A,$ and $B$ by their corresponding lowercase letters.  

To that end, we will apply~\eqref{Chernoff_lower} to the random variable $|N(S)| \in \bin(b, q)$ with $q = 1-(1-p)^s$.  But $sp \le ap \le (np)^{k-2}p =(np)^{k-1}/n \le 1$.
Therefore, by~\eqref{eq:ineqs} $q$ may be estimated as $q = 1 -(1-p)^s \ge  1- e^{-sp} \ge 1-(1-sp/2) = sp/2$, and so the expected value satisfies $\mu = bq\ge bsp/2 \ge (1+o(1))snp/2 \gg s\log n$. Now invoking Chernoff's bound~~\eqref{Chernoff_lower}  with $\delta = 1/3$ implies together with the union bound that the probability of failure is at most 
\[
\sum_{S\subseteq A} \Pr(|N(S)| \le (1-\delta)\mu) \le \sum_{s = 1}^a \binom{a}{s} \exp\left(-\delta^2 \mu/2\right)
\le  \sum_{s = 1}^a \exp\left(s \log a  - \mu/18 \right)
\]
and since trivially $a\le n$ and $\mu \gg s\log n$ we can easily bound the above probability by $1/n^3$. Finally note that
\[
|N(S)| \ge (1-\delta)\mu \ge (1+o(1))2snp/6 >  snp/4 = d|S|,
\]
as required.
\end{proof}

\begin{lemma}\label{lem:nbhs_2}
For a fixed integer $k \ge 2$, let $ \sqrt{n} \le np \le \sqrt{n\log n}$. Suppose the set $A$ satisfies $|A| \le  np$ and the set $B$ has order at least $n/2$ and is disjoint from $A$. Consider the random bipartite graph on $A\cup B$ with edge probability $p$.  Then with probability at least $1-\frac{1}{n^3}$ there is a $d$-matching that saturates $A$ with $d = 1/(6p)$. 
\end{lemma}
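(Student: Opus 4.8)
The plan is to follow the template of Lemma~\ref{lem:nbhs}: I will show that \whp every $S \subseteq A$ satisfies $|N(S)| \ge d|S|$ with $d = 1/(6p)$, and then use that this defect form of Hall's condition is exactly what guarantees a $d$-matching saturating $A$. The latter reduction is standard: replacing each vertex of $A$ by $d$ twins that share its neighborhood turns a saturating $d$-matching into an ordinary perfect matching of the twins into $B$, and the only Hall constraints that can fail are those obtained by taking all $d$ twins of a set $S \subseteq A$, which read precisely $|N(S)| \ge d|S|$. Writing $a = |A| \le np$, $b = |B| \ge n/2$, and $s = |S|$, I will use that for fixed $S$ the quantity $|N(S)|$ is distributed as $\bin(b,q)$ with $q = 1-(1-p)^s$ and mean $\mu = bq$.

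The new feature relative to Lemma~\ref{lem:nbhs} is that $sp$ is no longer bounded by $1$: since $a \le np$ and $np \le \sqrt{n\log n}$ we have $sp \le ap = np^2 \le \log n$, so $q$ need not be small, and I would estimate $\mu$ in two regimes. If $sp \le 1$, then by~\eqref{eq:ineqs} $q \ge 1-e^{-sp} \ge sp/2$, so $\mu \ge bsp/2 \ge nsp/4 \ge s/(4p)$, the last step using $np^2 \ge 1$ (which follows from $np \ge \sqrt n$). If instead $sp > 1$, then $q \ge 1-e^{-1} > 1/2$, so $\mu \ge b/2 \ge n/4$, while the target $ds = s/(6p) \le a/(6p) \le n/6$ is smaller. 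In both regimes one gets the clean bound $\mu \ge \tfrac32\,ds$, so choosing $\delta = 1/3$ makes $(1-\delta)\mu \ge ds$, and a failure $|N(S)| < ds$ forces $|N(S)| \le (1-\delta)\mu$.

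I would then apply the Chernoff lower tail~\eqref{Chernoff_lower} together with a union bound, bounding the total failure probability by
\[
\sum_{s=1}^{a} \binom{a}{s}\exp\of{-\mu\delta^2/2} \le \sum_{s=1}^{a}\exp\of{s\log(np) - \mu/18}.
\]
In the regime $sp \le 1$ I use $\mu \ge s/(4p)$ together with $1/(72p) \ge \tfrac{1}{72}\sqrt{n/\log n} \gg \log n \ge \log(np)$, so each exponent is at most $-s/(144p)$ and the resulting geometric-type sum is far below $1/n^3$. In the regime $sp > 1$ I use $\mu \ge n/4$ together with $s\log(np) \le np\log(np) = o(n)$, so each of the at most $np$ terms is at most $\exp(-n/144)$ for large $n$. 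Adding the two contributions yields a failure probability below $1/n^3$, as required.

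All of this is routine once the case split is in place; the one point that needs genuine care---and the main obstacle---is exactly this split, since $q$ transitions from the linear regime $q \approx sp/2$ to the saturated regime $q \approx 1$, and one must check in both that the Chernoff exponent $\mu/18$ dominates the entropy term $s\log(np)$ arising from the union bound. The saturated regime is also what forces $d$ to be only $\Theta(1/p)$ here, rather than the $\Theta(np)$ achieved in Lemma~\ref{lem:nbhs}.
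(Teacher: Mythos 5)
Your proof is correct and takes essentially the same route as the paper's: both verify the defect Hall condition $|N(S)| \ge d|S|$ for all $S \subseteq A$ via the Chernoff lower tail \eqref{Chernoff_lower} with $\delta = 1/3$ and a union bound, split into the identical two regimes $sp \le 1$ (where $q \ge sp/2$ gives $\mu \ge snp/4$) and $sp > 1$ (where $q \ge 1 - e^{-1}$ gives $\mu \ge n/4$), with the same comparison of the entropy term against $\mu/18$. Your explicit consolidated bound $\mu \ge \tfrac{3}{2}\,ds$ and the twin-vertex reduction from a $d$-matching to Hall's condition are only minor tidyings of steps the paper carries out (or leaves implicit) in the same way.
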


\begin{proof}
We show that \whp~every set $S \subseteq A$ has $|N(S)| \ge d|S|$. We apply~\eqref{Chernoff_lower} to $|N(S)| \in \bin(b, q)$ with $q = 1-(1-p)^s$. We consider two cases. First assume that $ps \le 1$. Then by~\eqref{eq:ineqs} $q = 1 -(1-p)^s \ge  1- e^{-sp} \ge 1-(1-sp/2) = sp/2$ and so the expected value satisfies $\mu = bq\ge bsp/2 \ge snp/4 \gg s\log n$. So invoking Chernoff's bound~~\eqref{Chernoff_lower}  with $\delta = 1/3$ implies together with the union bound (over all subsets of size at most $1/p$) that the probability of failure is at most 
\[
\sum_{S\subseteq A} \Pr(|N(S)| \le (1-\delta)\mu) \le \sum_{s = 1}^{1/p} \binom{a}{s} \exp\left(-\delta^2 \mu/2\right)
\le  \sum_{s = 1}^{a} \exp\left(s \log a  - \mu/18 \right)
\]
and since trivially $a\le n$ and $\mu \gg s\log n$ we can easily bound the above probability by $1/n^3$. Finally note that
\[
|N(S)| \ge (1-\delta)\mu \ge  snp/6 = s(np)^2/(6np)  =  d|S| (np)^2/n\ge d|S|,
\]
as required. Now assume that $ps > 1$. But then $q \ge 1 - e^{-sp}\ge 1 - e^{-1} \ge 1/2$. Hence $\mu = bq \ge n/4$. So again using Chernoff's bound~~\eqref{Chernoff_lower} and the union bound again (over all subsets of size at least $1/p$) with $\delta = 1/3$, we have that the failure probability is at most 
\[
\sum_{S\subseteq A} \Pr(|N(S)| \le (1-\delta)\mu) \le \sum_{s = 1/p}^{a} \binom{a}{s} \exp\left(-\delta^2 \mu/2\right)
\le  \sum_{s = 1}^{a} \exp\left(s \log a  - \mu/18 \right)
\]
so since $s \log a \le a\log a \le np \log n\le \sqrt{n} (\log n)^{3/2}$ and $\mu \ge n/4$ then we may again bound the failure probability by $1/n^3$. We also satisfy Hall's Condition since 
\[
|N(S)| \ge (1-\delta)\mu \ge  n/6 =  np/(6p)  \ge d|S|.
\]
\end{proof}

The next lemma modifies the well-known result that asserts that \whp~$\G(n,n,p)$ with $p=\frac{\log n+\omega}{n}$ has a perfect matching.

\begin{lemma}\label{lem:omega}
Consider the random bipartite graph $\G(m, m, q)$ with $q = \frac{\log m}{m}$. Let $C>0$ be an absolute constant. Then with probability at least $1-\frac{1}{m^C}$ the graph has a matching of size $(1-o(1))m$.
\end{lemma}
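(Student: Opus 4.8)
The plan is to use the deficiency form of Hall's theorem (the König--Ore formula): for a bipartite graph $G = (A \cup B, E)$ with $|A| = |B| = m$, the maximum matching has size $m - \max_{S \subseteq A}\of{|S| - |N(S)|}$. Hence, to produce a matching of size $(1-o(1))m$, it suffices to fix a slowly vanishing $\eps = \eps(m) \to 0$ with $\eps \log m \to \infty$ (for instance $\eps = 1/\sqrt{\log m}$) and to show that with probability at least $1 - m^{-C}$ every $S \subseteq A$ satisfies $|N(S)| \ge |S| - \eps m$.

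First I would observe that sets $S$ with $|S| < \eps m$ are harmless: for these $|S| - |N(S)| \le |S| < \eps m$ automatically, so no deficiency can arise. Thus I only need to control $S$ with $\eps m \le |S| \le m$. If some such $S$ of size $s$ had $|N(S)| < s - \eps m$, then $B \sm N(S)$ would contain a set $T$ of size exactly $(1+\eps)m - s$ with no edges to $S$. I would therefore bound the probability of the bad event by a union bound over all candidate pairs $(S,T)$ with $|S| = s$, $|T| = (1+\eps)m - s$, and $e(S,T) = 0$.

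The key estimate is as follows. Using $(1-q)^{s|T|} \le \exp(-q s |T|) = m^{-s|T|/m}$ from \eqref{eq:ineqs} together with $q = \log m / m$, and the elementary fact that the product $s|T| = s\of{(1+\eps)m - s}$ over the range $s \in [\eps m, m]$ is minimized at the two endpoints, where it equals $\eps m^2$, I get $(1-q)^{s|T|} \le m^{-\eps m}$ for every relevant pair. Since $\binom{m}{s}\binom{m}{|T|} \le 4^m$ and there are at most $m$ choices of $s$, the union bound gives a failure probability at most $m \cdot 4^m \cdot m^{-\eps m} = m\exp(m \log 4 - \eps m \log m)$, which for $\eps = 1/\sqrt{\log m}$ decays faster than any power of $m$, and is in particular well below $m^{-C}$.

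I expect the main (though modest) obstacle to be the bookkeeping in the union bound---specifically, verifying that the counting factor $4^m$ is dominated by the ``no edges'' probability $m^{-\eps m}$ uniformly over $s$. This reduces to the observation that the product $s|T|$ stays at least $\eps m^2$ across the whole range, so the exponent $\eps m \log m$ always beats $m \log 4$ once $\eps \log m \to \infty$. The freedom to ask only for a near-perfect rather than a perfect matching is exactly what lets us avoid the delicate analysis at the connectivity threshold $q = \log m / m$ (where isolated vertices typically appear), since those few unmatched vertices are absorbed into the $o(m)$ defect.
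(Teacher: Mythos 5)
Your proof is correct, but it takes a genuinely different and more elementary route than the paper's. The paper adapts Theorem~6.1 of Frieze--Karo\'nski~\cite{FK}: it aims for the smaller defect $r = m/\log m$, and to close the union bound at that scale it passes to \emph{minimal} obstructions, arguing that in a minimal obstruction every vertex of $N(S)$ has at least two neighbours in $S$ and $|N(S)| = |S|-r-1$ exactly, which brings in the extra factor $\binom{s(s-r-1)}{2(s-r-1)}q^{2(s-r-1)}$ and a case split around $s\approx 2r$. You instead accept the larger (still $o(m)$) defect $\eps m = m/\sqrt{\log m}$ and run a crude first-moment bound over empty bipartite pairs $(S,T)$ with $|S|=s$ and $|T|=(1+\eps)m-s$; the clean observation making this work is that $s\of{(1+\eps)m-s}$ is concave in $s$, hence minimized on $[\eps m, m]$ at the endpoints, where it equals exactly $\eps m^2$, so $(1-q)^{s|T|}\le m^{-\eps m}$ uniformly and the union bound $m\cdot 4^m\cdot m^{-\eps m} = \exp\of{\log m + m\log 4 - m\sqrt{\log m}}$ is superpolynomially small, comfortably below $m^{-C}$ for every fixed $C$ as the lemma demands. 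Your reduction steps are also sound: the K\H{o}nig--Ore defect formula legitimately lets you check deficiencies on one side only, sets with $|S|<\eps m$ indeed contribute defect less than $\eps m$ automatically, and $|B\sm N(S)| > (1+\eps)m - s$ guarantees the required $T$ exists. As for what each approach buys: the paper's finer analysis yields defect $O(m/\log m)$ rather than your $m/\sqrt{\log m}$, but since the lemma asserts only a matching of size $(1-o(1))m$, either defect suffices, and your argument is shorter and avoids the minimal-obstruction bookkeeping entirely; both proofs in fact deliver failure probabilities of order $e^{-\Omega(m)}$ or smaller, far stronger than the stated $1/m^{C}$.
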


\begin{proof}
The proof goes along the lines of Theorem~6.1 from~\cite{FK}.

Let $G=\G(m, m, q)$ the graph on the set of vertices $A\cup B$.
Let $r=m / (\log m)$. We show that with the desired probability $|N(S)| \ge |S|-r$ for all $S \subseteq A$. This will prove that there is a matching consisting of at least $m-r=(1-o(1))m$ edges. 

Note that if we have an ``obstruction" $|N(S)| < |S|-r$ for $S \subseteq A$, $|S|>(m+r)/2$, then letting $T= B \setminus N(S)$ we have that $|T| = m - |N(S)|$ so $|T| \le m-(|S|-r) < (m+r)/2$ and we have $|N(T)| \le m-|S| < m-(|N(S)|+r) =  |T|-r$. 

Also, if $S\subseteq A$ is a minimal obstruction then every vertex in $N(S)$ has degree at least 2 into $S$ (otherwise we could remove its neighbor from $S$ to obtain a smaller obstruction). Also we may assume that $|N(S)|$ is exactly $s-r-1$ or else we could find a smaller obstruction. 

Thus we look only for obstructions of the following form: $S \subseteq A$ (or $T \subseteq B$) consisting of $r+2 \le s \le (m+r)/2$ vertices, and having a neighborhood consisting of exactly $s-r-1$ vertices, each of which has two neighbors in $S$ (or $T$) and hence there are at least $2(s-r-1)$ edges between $S$ and $N(S)$.
\begin{align*}
   \sum_{s=r+2}^{(m+r)/2} &\binom{m}{s} \binom{m}{s-r-1} \binom{s(s-r-1)}{2(s-r-1)} q^{2(s-r-1)} (1-q)^{s(m-s+r+1)}\\
   &\le  \sum_{s=r+2}^{(m+r)/2} \rfrac{em}{s}^s \rfrac{em}{s-r-1}^{s-r-1} \rfrac{esq}{2}^{2(s-r-1)}  \exp\{-s(m-s+r+1)q\}\\
   & =  \sum_{s=r+2}^{(m+r)/2} \rbrac{ \frac{em}{s} e^{-mq}}^{r+1} \rbrac{\frac{em}{s} \frac{em}{s-r-1} \frac{e^2 s^2 q^2}{4} e^{-(m-s)q}}^{s-r-1}\\
   & \le  \sum_{s=r+2}^{(m+r)/2} \rbrac{\frac{O(1)}{r} }^{r} \rbrac{O(1) \cdot \frac{(\log m)^2 s}{s-r-1}  e^{-(1-s/m)\log m}}^{s-r-1}.
\end{align*}
Now if $r+2\le s\le 2r$, then $\frac{s}{s-r-1} \le 2r$ and 
\[
\rbrac{O(1) \cdot \frac{(\log m)^2 s}{s-r-1}  e^{-(1-s/m)\log m}}^{s-r-1}
\le \rbrac{ (\log m)^2 2r e^{-\frac{1}{2}\log m}}^{s-r-1} 
\le \rbrac{ (\log m)^2 2r e^{-\frac{1}{2}\log m}}^{r}. 
\]
implying
\[
\rbrac{\frac{O(1)}{r} }^r \rbrac{O(1) \cdot \frac{(\log m)^2 s}{s-r-1}  e^{-(1-s/m)\log m}}^{s-r}
= \rbrac{O(1) \cdot (\log m)^2 e^{-\frac{1}{2}\log m}}^{r} = (o(1))^r.
\]
Otherwise, if $s\ge 2r$, then $\frac{s}{s-r} \le 2$ and so $\frac{(\log m)^2 s}{s-r}  e^{-(1-s/m)\log m}=o(1)$ yielding again an upper bound of $(o(1))^r$.

Finally, since in the sum we have only $O(m)$ terms and $r = m/(\log m)$, we trivially get that the failure probability is at most $1/m^C$ for any positive constant~$C$.
\end{proof}

The last lemma deals with colored degrees and codegrees. Let $G=(V,E)$. Denote by $N_i(v)$ the $i$-colored neighborhood of $v$, i.e., the set of vertices $w$ such that $\{v,w\}\in E$ is colored by $i$. In particular, $N(v)$ is the union of $N_i(v)'s$ over all colors~$i$.
Also let $N_{ij}(u,v)$ be the set of all $w \in V$ such that $\{u,w\}$ and $\{v,w\}$ are edges and are colored $i$ and $j$, respectively.

\begin{lemma}\label{lem:sizes}
Let $0 < p < 1$ be a constant and $G=(V,E) = \G(n,p)$. Let $E$ be colored uniformly at random with the colors red and blue denoted by $R$ and $B$, respectively. Then, \whp~for any two vertices $u,v\in V$ we have
\[
|N_R(u)\setminus N(v)| \sim 
|N_B(u)\setminus N(v)| \sim \frac {np}2\left(1-p\right) \text{ and } |N_{RB}(u,v)| \sim |N_{BR}(u,v)| \sim \frac{np^2}{4}.
\]
\end{lemma}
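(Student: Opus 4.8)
The plan is to treat each of the four quantities as a sum of independent indicator variables over the $n-2$ vertices $w\notin\{u,v\}$, recognize each such sum as a binomial random variable, and then apply Chernoff's bounds with a deviation parameter tuned to survive a union bound over all $\binom{n}{2}$ pairs. First I would fix a pair $u,v$ and a vertex $w\neq u,v$ and compute the probability that $w$ contributes to each set. For $|N_R(u)\setminus N(v)|$, the event ``$w\in N_R(u)\setminus N(v)$'' requires that $\{u,w\}$ is present and colored red (probability $p/2$) and that $\{v,w\}$ is absent (probability $1-p$); since these involve disjoint edges, the two events are independent and the contribution probability is $\frac{p}{2}(1-p)$. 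Summing over the $n-2$ choices of $w$, and noting that the indicators are mutually independent because the edge pairs $\{u,w\},\{v,w\}$ are disjoint for distinct $w$, we get $|N_R(u)\setminus N(v)|\in\bin\of{n-2,\,\frac{p}{2}(1-p)}$, with mean $\mu\sim\frac{np}{2}(1-p)$. Identically, $|N_{RB}(u,v)|\in\bin\of{n-2,\,p^2/4}$ with mean $\sim np^2/4$, and the blue versions are handled the same way by symmetry.

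Next I would apply the Chernoff bounds \eqref{Chernoff_upper} and \eqref{Chernoff_lower} to each binomial. Because $p$ is a constant, every mean is $\mu=\Theta(n)$, so for a deviation parameter $\delta$ the two-sided failure probability is at most $2\exp(-\mu\delta^2/3)$. Choosing $\delta=\delta(n)\to 0$ with $\delta^2 n\gg\log n$ (for instance $\delta=(\log n/n)^{1/3}$) makes each single-pair failure probability $o(n^{-3})$ while keeping $\delta=o(1)$, so that the concentration interval $(1\pm\delta)\mu$ is indeed $\sim\mu$. Finally I would take a union bound over all $\binom{n}{2}<n^2$ pairs $u,v$ and the four random variables attached to each pair; the total failure probability is at most $4\binom{n}{2}\cdot o(n^{-3})=o(1)$. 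Hence \whp all four estimates hold simultaneously for every pair of vertices, which is exactly the claimed conclusion.

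The only point requiring care—rather than a genuine obstacle—is the simultaneous choice of $\delta$: it must vanish so that $(1\pm\delta)\mu\sim\mu$, yet satisfy $\mu\delta^2\gg\log n$ so that Chernoff outpaces the $n^2$-term union bound. Both requirements are met precisely because $\mu=\Theta(n)$ when $p$ is a constant, which is what keeps the argument clean; the independence of the per-$w$ indicators (so that the sums are genuinely binomial rather than merely concentrated) is the structural fact that allows a direct appeal to \eqref{Chernoff_upper} and \eqref{Chernoff_lower} without any correlation bookkeeping.
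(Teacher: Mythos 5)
Your proposal is correct and follows essentially the same route as the paper: identify each count as a binomial random variable ($\bin(n-2,\,p(1-p)/2)$ and $\bin(n-2,\,p^2/4)$ respectively), apply the Chernoff bounds \eqref{Chernoff_upper}--\eqref{Chernoff_lower} with a vanishing $\delta$ satisfying $\mu\delta^2\gg\log n$, and finish with a union bound over all $\binom{n}{2}$ pairs. The only cosmetic difference is your choice $\delta=(\log n/n)^{1/3}$ versus the paper's $\delta=\sqrt{7(\log n)/\mu}$, both of which vanish and beat the union bound, so the arguments are interchangeable.
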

\begin{proof}
Consider any two vertices $u,v \in V$. Note that $|N_R(u) \setminus N(v)|$  is binomially distributed with $n-2$ trials and probability of success $p(1-p)/2$, so the expected value $\mu$ is $(1+o(1))np(1-p)/2$. Set $\delta = \sqrt{7(\log n)/\mu}$ and apply Chernoff's bounds~\eqref{Chernoff_upper} and~\eqref{Chernoff_lower}. Thus, 
\[
\Pr(| |N_R(u)\setminus N(v)| -\mu | \ge \delta \mu )
\le 2\exp(-\delta^2\mu/3) = 2\exp(-(7/3 +o(1))\log n).
\]
Finally, the union bound over all $\binom{n}{2}$ pairs of vertices $u,v \in V$ yields that \whp~for any $u$ and $v$,  $|N_R(u) \setminus N(v)| \sim \frac {np}{2}(1-p)$. 
By symmetry, the same holds for $|N_B(u) \setminus N(v)|$.

For $|N_{RB}(u,v)|$, we apply similar reasoning. Note that $|N_{RB}(u,v)| \in \bin(n-2, p^2/4)$ and so $\mu \sim \frac{np^2}{4}\gg 1$. Applying Chernoff's bounds again with $\delta = \sqrt{7(\log n)/\mu}$ will imply the statement.
\end{proof}

\section{Alternating paths of length two}

Here we prove Theorem~\ref{thm:kappa_r2}. Actually we will prove a more general statement. 
\begin{theorem}\label{thm:kappa_r2_dreg}
Let $G = (V,E)$ be a graph with $|V|=n,$ where all vertices have degree $\sim d$ and every pair of vertices has codegree $\sim d^2/n \gg \log n$. Then, for any number of colors $r\ge 2$
\[
\kappa_{r,2}(G)\sim \left(1-\frac 1r\right) \frac{d^2}n.
\]
\end{theorem}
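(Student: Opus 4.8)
The plan is to establish the two matching bounds separately: an upper bound that holds for \emph{every} $r$-coloring via a convexity-and-averaging argument, and a lower bound exhibiting a good coloring through the probabilistic method. Throughout, the key structural observation is that the only internal vertex of a length-two path is a common neighbor, so distinct common neighbors automatically yield internally disjoint paths; hence if, for a pair $\{u,v\}$, we let $a_{uv}$ be the number of common neighbors $w$ with the two edges $\{u,w\}$ and $\{w,v\}$ differently colored, then $a_{uv}$ is exactly the quantity we must control over all pairs.

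For the upper bound I would fix an arbitrary $r$-edge-coloring $c$ and write $d_i(w)=|N_i(w)|$ for the color-$i$ degree of a vertex $w$. The number of bichromatic cherries centered at $w$ is $\binom{\deg w}{2}-\sum_{i=1}^r\binom{d_i(w)}{2}$, which by convexity (balancing the $d_i(w)$ at $\deg(w)/r$) is at most $(1+o(1))\tfrac12(1-1/r)(\deg w)^2$. Summing over centers gives
\[
\sum_{\{u,v\}}a_{uv}=\sum_w\Big(\binom{\deg w}{2}-\sum_i\binom{d_i(w)}{2}\Big)\le (1+o(1))\tfrac12\Big(1-\tfrac1r\Big)\sum_w(\deg w)^2\sim \tfrac12\Big(1-\tfrac1r\Big)nd^2,
\]
using the degree regularity $\deg w\sim d$. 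Dividing by $\binom n2\sim n^2/2$ shows the \emph{average} of $a_{uv}$ is at most $(1+o(1))(1-1/r)d^2/n$, hence so is the minimum. As this holds for every coloring, $\kappa_{r,2}(G)\le (1+o(1))(1-1/r)d^2/n$.

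For the lower bound I would color each edge independently and uniformly at random from $[r]$ and show this coloring already succeeds. Fix a pair $\{u,v\}$ with $\codeg(u,v)\sim d^2/n$ common neighbors. For each common neighbor $w$ the length-two path through $w$ is alternating exactly when $c(\{u,w\})\neq c(\{w,v\})$, an event of probability $1-1/r$; since distinct common neighbors use disjoint pairs of edges, these events are independent, so $a_{uv}\sim\bin(\codeg(u,v),1-1/r)$ with mean $\mu\sim(1-1/r)d^2/n\gg\log n$. Taking $\delta=\sqrt{7\log n/\mu}=o(1)$ in~\eqref{Chernoff_lower} bounds the lower tail by $\exp(-\delta^2\mu/2)=n^{-7/2}$, so a union bound over all $\binom n2<n^2$ pairs shows that \whp every pair is joined by at least $(1-\delta)\mu\sim(1-1/r)d^2/n$ alternating paths. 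In particular such a coloring exists, giving $\kappa_{r,2}(G)\ge(1-o(1))(1-1/r)d^2/n$, and combining the two bounds completes the proof.

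I expect the only genuine work to lie in the lower bound: one must verify the independence of the per-neighbor alternation events (immediate from distinct centers using disjoint edges) and ensure the concentration survives the union bound. It is precisely here that the hypothesis $\codeg(u,v)\sim d^2/n\gg\log n$ is essential, since it forces $\mu\gg\log n$ and lets $\delta\to0$ while keeping $\delta^2\mu\gg\log n$. The upper bound, by contrast, is a deterministic convexity estimate relying only on the degree regularity $\deg w\sim d$.
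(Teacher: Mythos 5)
Your proposal is correct and follows essentially the same route as the paper: the upper bound via counting bichromatic cherries at each center and the inequality $\sum_i d_i(w)^2 \ge (\deg w)^2/r$ (your convexity step is exactly the paper's Cauchy--Schwarz step, since $\binom{\deg w}{2}-\sum_i\binom{d_i(w)}{2}=\frac12\bigl((\deg w)^2-\sum_i d_i(w)^2\bigr)$), followed by averaging over pairs; and the lower bound via a uniformly random $r$-coloring, the observation that $X_{u,v}\sim\bin\bigl((1+o(1))d^2/n,\,1-1/r\bigr)$, and a Chernoff-plus-union-bound argument using $d^2/n\gg\log n$ (the paper takes $\delta=\sqrt{5\log n/\mu}$ rather than your $\sqrt{7\log n/\mu}$, which is immaterial). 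No gaps; the argument is complete as written.
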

The standard application of Chernoff's bound implies that $\G(n,p)$ meets the requirements of Theorem \ref{thm:kappa_r2_dreg} \whp~and hence Theorem~\ref{thm:kappa_r2} holds. Furthermore, observe that if $p \ll \sqrt{\log n/n}$, then \whp~the diameter is at least three. Therefore, Theorem~\ref{thm:kappa_r2} is basically optimal.

We separate the proof of Theorem \ref{thm:kappa_r2_dreg} into two lemmas. The upper bound follows from the Cauchy-Schwarz inequality and does not require the assumption on the codegrees. The lower bound is obtained by considering a random coloring along with Chernoff's bound. 

\begin{lemma}\label{lem:kappa_r2_ub}
Let $G = (V,E)$ be a graph where all vertices have degree $\sim d$ and $|V| = n$. Then, for any number of colors $r\ge 2$
\[
\kappa_{r,2}(G) \le \left( 1-\frac{1}{r} +o(1)\right)\frac{d^2}{n}.
\]
\end{lemma}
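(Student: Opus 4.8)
The plan is to use a straightforward double-counting argument together with an averaging step, so that the claim reduces to an upper bound on the total number of alternating paths of length two, summed over all pairs. First I would record what the objects are: for any fixed $r$-coloring, an alternating path of length two joining $u$ and $v$ is exactly a common neighbor $w\in N(u)\cap N(v)$ with $c(\{u,w\})\neq c(\{w,v\})$, and two such paths are internally disjoint precisely when their midpoints $w$ differ. Hence the number of alternating paths between $u$ and $v$ is just the number of common neighbors whose two incident edges get different colors, and $\kappa_{r,2}(G)$ equals, for the best coloring, the minimum of this quantity over all pairs. Thus it suffices to show that \emph{every} coloring has some pair with few alternating paths.

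Next I would count, for a fixed coloring, the quantity $P=\sum_{\{u,v\}}(\text{alternating paths between }u,v)$ by reorganizing the sum according to the midpoint $w$. Writing $d_i(w)=|N_i(w)|$ for the number of color-$i$ edges at $w$, the pairs $\{u,v\}$ that contribute through $w$ are exactly the pairs of neighbors of $w$ whose edges to $w$ have distinct colors, so the contribution of $w$ is $\sum_{i<j} d_i(w)d_j(w)$. Therefore
\[
P=\sum_{w\in V}\ \sum_{i<j} d_i(w)\,d_j(w).
\]

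The heart of the argument, and the step the hint points to, is a Cauchy--Schwarz bound on the inner sum. Using the identity $\sum_{i<j} d_i(w)d_j(w)=\tfrac12\big(\deg(w)^2-\sum_i d_i(w)^2\big)$ together with $\sum_i d_i(w)^2\ge \deg(w)^2/r$ (which is Cauchy--Schwarz applied to the vectors $(d_i(w))_i$ and $(1,\dots,1)$ over the $r$ colors), I obtain
\[
\sum_{i<j} d_i(w)\,d_j(w)\le \tfrac12\Big(1-\tfrac1r\Big)\deg(w)^2.
\]
Since every vertex has degree $\sim d$, we have $\deg(w)^2=(1+o(1))d^2$ uniformly, and summing over the $n$ vertices gives $P\le(1+o(1))\tfrac n2\big(1-\tfrac1r\big)d^2$.

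Finally, I would pass from the total to the minimum by averaging: there are $\binom{n}{2}\sim n^2/2$ pairs, and the minimum over pairs is at most the average $P/\binom{n}{2}$, so for every coloring some pair has at most
\[
\frac{P}{\binom n2}\le (1+o(1))\Big(1-\frac1r\Big)\frac{d^2}{n}
\]
alternating paths; taking the maximum over colorings yields the stated bound on $\kappa_{r,2}(G)$. I do not anticipate any real obstacle here: the argument is elementary, and the only points needing care are folding the uniform estimate $\deg(w)=(1+o(1))d$ into a single $(1+o(1))$ factor and observing that the codegree hypothesis of Theorem~\ref{thm:kappa_r2_dreg} plays no role in the upper bound (it will be needed only for the matching-based lower bound). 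The load-bearing facts are the identity for $\sum_{i<j} d_i d_j$ and the Cauchy--Schwarz inequality, both of which are immediate.
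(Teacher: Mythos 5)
Your proposal is correct and follows essentially the same route as the paper: counting alternating paths of length two by their midpoints, applying the identity $\sum_{i<j} d_i(w)d_j(w)=\tfrac12\big(\deg(w)^2-\sum_i d_i(w)^2\big)$ with Cauchy--Schwarz to bound the inner sum by $\tfrac12\big(1-\tfrac1r\big)\deg(w)^2$, and then averaging over the $\binom{n}{2}$ pairs. Your added observations---that internal disjointness is automatic for distinct midpoints and that the codegree hypothesis is not needed for the upper bound---are both accurate and consistent with the paper's treatment.
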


\begin{proof} 
The total number of alternating paths of length~2 in $G$ is at most (due to the Cauchy-Schwarz inequality)
\begin{align*}
\sum_{v\in V}   \sum_{1\le i<j\le r} \deg_i(v) \deg_j(v) &= \sum_{v\in V} \frac{1}{2} \left(\left( \sum_{i=1}^r \deg_i(v)  \right)^2 -  \sum_{i=1}^r \deg_i(v)^2 \right)\\
&\le \sum_{v\in V}\frac{1}{2} \left(\deg(v)^2 -  \frac{\deg(v)^2}{r}  \right)\\
&\le \frac{nd^2}{2}\left(1 - \frac1r + o(1)\right),
\end{align*}
and so 
\[
\kappa_{r,2}(G) \le \frac{nd^2}{2}\left(1 - \frac1r +o(1)\right)\big{/} \binom{n}{2} =  \left( 1-\frac{1}{r} +o(1)\right)\frac{d^2}{n}.
\]
\end{proof}

\begin{lemma}\label{lem:kappa_r2_lb}
Let $G = (V,E)$ be a graph with $|V|=n,$ where all vertices have degree $\sim d$ and every pair of vertices has codegree $\sim d^2/n \gg \log n$. Then, for any number of colors $r\ge 2$
\[
\kappa_{r,2}(G) \ge \left( 1-\frac{1}{r} +o(1)\right) \frac{d^2}{n}.
\]
\end{lemma}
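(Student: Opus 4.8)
The plan is to use a \emph{uniformly random} $r$-edge-coloring and then prove concentration via Chernoff's bound. First I would observe that, since we seek paths of length two, an internally disjoint collection of alternating $u$--$v$ paths corresponds precisely to a set of distinct common neighbors $w\in N(u)\cap N(v)$ for which the two edges $\{u,w\}$ and $\{w,v\}$ receive different colors; internal disjointness is automatic because the single internal vertices $w$ are distinct. Hence, writing $m=\codeg(u,v)\sim d^2/n$, the number of such paths equals $\sum_{w} X_w$, where $X_w$ is the indicator that $\{u,w\}$ and $\{w,v\}$ get distinct colors.

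Next I would color each edge of $G$ independently and uniformly from $[r]$. For a fixed pair $u,v$ and a common neighbor $w$, the two edges $\{u,w\}$ and $\{w,v\}$ receive distinct colors with probability exactly $1-1/r$. Crucially, for distinct common neighbors $w\neq w'$ the four edges $\{u,w\},\{w,v\},\{u,w'\},\{w',v\}$ are all distinct, so the indicators $X_w$ are mutually independent. Therefore the number of alternating $u$--$v$ paths is distributed as $\bin(m,\,1-1/r)$ with mean $\mu=(1-1/r)m\sim (1-1/r)d^2/n$.

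Then I would apply the Chernoff lower bound \eqref{Chernoff_lower}. Since the codegree hypothesis gives $m\sim d^2/n\gg\log n$ and $1-1/r$ is a positive constant, we have $\mu\gg\log n$. Choosing $\delta=\sqrt{7(\log n)/\mu}=o(1)$ yields $\Pr\big(\bin(m,1-1/r)\le (1-\delta)\mu\big)\le \exp(-\delta^2\mu/2)=\exp(-(7/2)\log n)=n^{-7/2}$. A union bound over all $\binom{n}{2}$ pairs $u,v$ then shows that \whp~every pair is joined by at least $(1-\delta)\mu=(1-1/r+o(1))d^2/n$ internally disjoint alternating paths of length two, which gives the claimed lower bound.

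The main (and essentially only) obstacle is justifying the independence of the $X_w$; once that is in hand the problem reduces to a clean binomial concentration estimate, and the assumption $d^2/n\gg\log n$ is exactly what is needed to beat the union bound. A minor point to track is that the codegree is only $\sim d^2/n$ rather than exactly $d^2/n$, but since this only enters through $\mu$, the $(1+o(1))$ factor is harmless and is absorbed into the final $o(1)$ term.
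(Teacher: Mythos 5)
Your proposal is correct and takes essentially the same approach as the paper's proof: color each edge uniformly at random from $[r]$, note that the number of alternating length-two paths between a fixed pair is $\bin\left((1+o(1))d^2/n,\,1-1/r\right)$, apply Chernoff's lower bound \eqref{Chernoff_lower} with $\delta=\Theta\left(\sqrt{(\log n)/\mu}\right)=o(1)$, and finish with a union bound over all $\binom{n}{2}$ pairs. The only difference is cosmetic: you explicitly justify the independence of the indicators and the automatic internal disjointness (which the paper dismisses with ``Clearly''), and you use the constant $7$ in place of the paper's $5$ inside $\delta$; both choices beat the union bound.
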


\begin{proof}
To each edge in $E$ we assign a color from $\{1,\dots,r\}$ uniformly at random.
For $u,v\in V$, let $X_{u,v}$ be the random variable that counts the number of alternating paths between $u$ and $v$ of length~2. Clearly, $X_{u,v} \sim \bin((1+o(1))d^2/n, 1-1/r)$. 

Notice that $\mu:=\E(X_{u,v}) = \left( 1-\frac{1}{r} +o(1)\right)\frac{d^2}{n}\gg \log n$ and $\delta: = \sqrt{(5\log n)/\mu} = o(1)$. Thus, \eqref{Chernoff_lower} yields
\[
\Pr(X_{u,v} \le (1-\delta)\mu) \le \exp(-\mu \delta^2/2) \le \exp((-5/2 + o(1))\log n).
\]
Thus, the union bound taken over all $\binom{n}{2}\le \exp(2\log n)$ pairs of vertices in $V$ yields the statement.
\end{proof}

\section{Alternating paths of length at least three in dense random graphs}

\noindent We consider the case when $p$ is constant and the length  $\ell$ of the paths we seek is greater than two. Clearly the largest possible number of internally disjoint paths between two vertices is $\sim n/(\ell - 1)$. On the other hand, two vertices cannot have more internally disjoint paths between them than one half of the size of the union of their neighborhoods. As it happens, there exists a coloring that allows for a matching lower bound of \emph{alternating} paths, no matter which case occurs. 

Here we prove Theorem~\ref{thm:kappa-dense}, which we state below again for convenience.

\begin{theorem*}{\bf \ref{thm:kappa-dense}}
Let $ 0 < p < 1$ be a constant and $G = \G(n,p)$. Then,  for any integer $\ell \ge 3$ \whp 
\[
\kappa_{r,\ell}(G) \sim \min \left\lbrace\frac{n}{\ell -1},  np\left(1-\frac{p}{2}\right)\right\rbrace.
\]
\end{theorem*}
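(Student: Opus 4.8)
The plan is to establish the two matching bounds separately, reducing throughout to the case $r=2$. Since any red/blue coloring is also a valid $r$-coloring and alternation is unaffected by the presence of unused colors, $\kappa_{r,\ell}(G)\ge\kappa_{2,\ell}(G)$; conversely the two obstructions below concern internally disjoint paths only and ignore the coloring, so they bound $\kappa_{r,\ell}(G)$ for every $r$. Thus it suffices to prove the upper bound for arbitrary $r$ and the lower bound for a single random red/blue coloring.

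For the upper bound, fix $u,v$ and a family of $t$ internally disjoint paths of length $\ell\ge3$ between them. Each uses $\ell-1$ internal vertices, disjoint across the family, so $t(\ell-1)\le n-2$ and $t\le(1+o(1))n/(\ell-1)$. Next let $a_i,b_i$ be the first and last internal vertices of the $i$-th path; since $\ell\ge3$ forces at least two internal vertices we have $a_i\ne b_i$, and internal disjointness makes $a_1,\dots,a_t,b_1,\dots,b_t$ a set of $2t$ distinct vertices inside $N(u)\cup N(v)$. Hence $2t\le|N(u)\cup N(v)|$, and a Chernoff-plus-union-bound computation (in the spirit of Lemma \ref{lem:sizes}) gives $|N(u)\cup N(v)|\sim 2np(1-p/2)$ uniformly over all pairs, so $t\le(1+o(1))np(1-p/2)$.

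For the lower bound I would color $E(G)$ uniformly at random in red and blue and condition on the probability-$(1-o(1))$ event that Lemma \ref{lem:sizes} holds for all pairs and that Lemma \ref{lem:matchings}, applied to the red graph and to the blue graph (each distributed as $\G(n,p/2)$), furnishes near-perfect matchings between all pairs of subsets of the relevant linear size; thereafter the construction is deterministic. Fix $u,v$, set $t=(1-o(1))\min\{n/(\ell-1),np(1-p/2)\}$, and build the paths within two color-start groups whose $2(\ell-1)$ layers are pairwise disjoint: in each group I choose layers $L_1,\dots,L_{\ell-1}$ of common size with $L_1\subseteq N(u)$ and $L_{\ell-1}\subseteq N(v)$, route every path as $u,y_1,\dots,y_{\ell-1},v$ with $y_j\in L_j$, and draw the edges $y_jy_{j+1}$ from color-class matchings between consecutive layers whose colors alternate. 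Each such matching exists by Lemma \ref{lem:matchings} because the color classes have constant density $p/2$; chaining the $\ell-2$ matchings loses only $o(n)$ vertices per layer, hence $o(t)$ paths overall, leaving $(1-o(1))t$ complete internally disjoint alternating paths.

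The step I expect to be the main obstacle is the coupling of the coloring with the two endpoints. The color of $uy_1$ is determined by which vertex of $N(u)$ lands in $L_1$, so paths split into a group $\mathcal R$ with $L_1\subseteq N_R(u)$ and pattern $R,B,R,\dots$ and a group $\mathcal B$ with $L_1\subseteq N_B(u)$ and pattern $B,R,B,\dots$; the parity of $\ell$ then fixes the color of the closing edge $y_{\ell-1}v$, forcing $L_{\ell-1}$ into a prescribed color class of $N(v)$ that differs between the two groups. To reach the target one must choose the four endpoint layers disjointly while honoring these color constraints and the balance $|L_1^{\mathcal R}|=|L_{\ell-1}^{\mathcal R}|$ and $|L_1^{\mathcal B}|=|L_{\ell-1}^{\mathcal B}|$. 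Feeding in the sizes of $N_R(u)\setminus N(v)$, $N_B(u)\setminus N(v)$ and the classes $N_{ij}(u,v)$ from Lemma \ref{lem:sizes}, this becomes a small transportation problem that is feasible up to $\tfrac12|N(u)\cup N(v)|\sim np(1-p/2)$ --- exactly as in the transparent case $\ell=3$, where one matches the red-start endpoints through blue edges and the blue-start endpoints through red edges --- so the endpoint requirement never costs more than the internal-vertex count already does. Since the two conditioning events hold uniformly over all pairs and all subsets, no further union bound over pairs is needed.
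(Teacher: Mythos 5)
Your proposal is correct, and its skeleton is the paper's: the same two upper-bound obstructions (the internal-vertex count $t(\ell-1)\le n-2$, and the observation that the first and last internal vertices give $2t\le |N(u)\cup N(v)|\sim 2np(1-p/2)$, exactly as in Lemma~\ref{lem:kappa_2ell:c2}), a uniformly random red/blue coloring, four disjoint endpoint classes carved out of the colored neighborhoods and codegree sets of Lemma~\ref{lem:sizes} (your ``transportation problem'' is solved in the paper by the explicit assignment $X_R=(N_R(u)\setminus N(v))\cup N_{RR}(u,v)$, $Y_B=(N_B(v)\setminus N(u))\cup N_{RB}(u,v)$, etc., which makes the four classes disjoint with sizes $\sim\frac{np}{2}(1-\frac{p}{2})$ each), and two chains of disjoint layers joined by alternating-color near-perfect matchings supplied by Lemma~\ref{lem:matchings} applied to the red and blue subgraphs, each distributed as $\G(n,p/2)$. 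The one genuine difference is how the regime $n/(\ell-1)\le np(1-p/2)$ is handled: the paper (Lemma~\ref{lem:kappa_2ell:c1}) runs the construction in two stages, first with layers of width $s/(2(\ell-3))$ whose interiors are drawn from the common non-neighborhood $S$, then a second family of width $k/(2(\ell-1))$ routed through the leftover neighbors, recovering the total from the identity $s/(\ell-3)+k/(\ell-1)\sim n/(\ell-1)$; you instead run a single round with uniform layer size $t/2$ where $t=(1-o(1))\min\{n/(\ell-1),\,np(1-p/2)\}$. Your unified version is feasible: the endpoint requirement $t/2\le(1+o(1))\frac{np}{2}(1-\frac{p}{2})$ is precisely the case hypothesis of that regime, and the interior layers consume $t(\ell-3)$ vertices out of the $n-2-2t$ available, i.e.\ $t(\ell-1)\le n-2$, which the $(1-o(1))$ slack in $t$ provides; the $\ell=3$ case (no interior layers) is consistent with the paper's remark that there the minimum is always $np(1-p/2)$. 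So your argument is a mild but real simplification, trading the paper's two-stage bookkeeping for one tight counting inequality and dissolving the case split between Lemmas~\ref{lem:kappa_2ell:c1} and~\ref{lem:kappa_2ell:c2}; what the paper's version buys is a fully explicit accounting of which vertices are covered (almost all of $S$, then the surplus neighbors), whereas in a write-up of your version you would still need to make the endpoint allocation explicit (the paper's $X_R,X_B,Y_R,Y_B$ does this, noting that $|N_{RR}|,|N_{BB}|\sim np^2/4$ follows by the same argument as Lemma~\ref{lem:sizes}) and to note that Lemma~\ref{lem:matchings} is invoked only for finitely many values of $\alpha$, so its ``for all pairs of sets'' conclusion indeed needs no further union bound.
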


Our strategy for the lower bound will be to color all edges uniformly at random with two colors. Theorem~\ref{thm:kappa-dense} will follow from Lemmas~\ref{lem:kappa_2ell:c1} and~\ref{lem:kappa_2ell:c2}.

Note that when $\ell = 3$, we have $\min\lbrace n/2, np(1-p/2)\rbrace = np(1-p/2)$ for any $0<p<1$. 

\begin{lemma}\label{lem:kappa_2ell:c1}
Let $0 < p < 1$ be a constant and  $G = \G(n,p)$. Then for any integer $\ell\ge 4$ satisfying $n/(\ell - 1) \le  np(1-p/2)$ for sufficiently large $n$, we have \whp 
\[
\kappa_{r,\ell}(G) \sim \frac{n}{\ell -1}.
\]
\end{lemma}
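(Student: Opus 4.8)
The plan is to prove matching upper and lower bounds. The upper bound is immediate and holds for \emph{every} graph and colouring: any collection of internally disjoint paths of length $\ell$ between a fixed pair $u,v$ uses $\ell-1$ distinct internal vertices per path, all lying in $V\sm\{u,v\}$, so there are at most $(n-2)/(\ell-1)\le(1+o(1))n/(\ell-1)$ of them, giving $\kappa_{r,\ell}(G)\le(1+o(1))n/(\ell-1)$. All the work is in the lower bound, where it suffices to exhibit one good colouring. I would colour $E(G)$ uniformly at random with two colours, red and blue (ignoring the other $r-2$ colours), and show that \whp\ over the choice of $G$ and of the colouring this two-colouring witnesses $\kappa_{r,\ell}(G)\ge(1-o(1))n/(\ell-1)$; since $\Pr_G(\exists\,c\text{ good})\ge\Pr_{G,c}(c\text{ good})$, a joint \whp\ bound yields the desired \whp\ bound over $G$.

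Fix a pair $u,v$, set $t=(1-o(1))n/(\ell-1)$, and aim to build $\sim t$ internally disjoint alternating paths $u=x_0,x_1,\dots,x_{\ell-1},x_\ell=v$. The only real constraint at the ends is that each $x_1$ is a neighbour of $u$ and each $x_{\ell-1}$ a neighbour of $v$ in a prescribed colour; since $u$ has only $\sim np/2$ red (resp.\ blue) neighbours, I cannot route all paths with a single colour pattern. So I split the paths into two types of $t/2$ each: type~A starting $u\xrightarrow{R}x_1$ and type~B starting $u\xrightarrow{B}x_1$, so that the starting colours exhaust both of $u$'s colour classes; the last-edge colour of each type is then forced by the parity of $\ell$. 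The first step is to choose disjoint sets $F_A\subseteq N_R(u)$, $F_B\subseteq N_B(u)$ of first-internal vertices and $L_A,L_B\subseteq N(v)$ of last-internal vertices (each in its forced colour toward $v$), all four pairwise disjoint and each of size $t/2$. Using the colour-class sizes from Lemma~\ref{lem:sizes}, I would treat this as a transportation problem in which a common neighbour in $N(u)\cap N(v)$ may fill a $u$-role or a $v$-role but not both; a short Hall/min-cut check shows it is feasible, the binding inequality being that the total demand $4\cdot(t/2)=2t$ is at most $|N(u)\cup N(v)|\sim2np(1-p/2)$, which is exactly the hypothesis $n/(\ell-1)\le np(1-p/2)$.

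With the ends fixed, I would partition a $(1-o(1))$-fraction of the remaining vertices into the intermediate layers: for each type, $\ell-3$ middle layers of size $t/2$, all layers over both types disjoint, which fits since the total demand is $(\ell-1)t\approx n$. This is where $\ell\ge4$ enters, as each type then has at least one unconstrained middle layer drawn from the large pool. Between consecutive layers I apply Lemma~\ref{lem:matchings} to the red (resp.\ blue) subgraph: each is distributed as $\G(n,p/2)$ with $p/2$ constant, so \whp\ there is a near-perfect matching of the prescribed colour between any two equal linear-sized parts. Composing the $\ell-2$ colour-alternating matchings of one type and discarding the $o(t)$ chains that hit an unmatched vertex leaves $(1-o(1))t/2$ complete alternating paths per type, hence $(1-o(1))t\sim n/(\ell-1)$ internally disjoint alternating paths between $u$ and $v$. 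Since Lemmas~\ref{lem:sizes} and~\ref{lem:matchings} already hold \whp\ simultaneously over all pairs and all subsets, a union bound over the $\binom n2$ pairs finishes.

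The main obstacle I anticipate is the endpoint allocation of the second paragraph: because $t$ generally exceeds $np/2$, both colour patterns are mandatory, and the four colour-constrained role sets must be packed disjointly while sharing the common neighbourhood $N(u)\cap N(v)$. Verifying the Hall condition for this packing, and checking that it holds with room to spare precisely under $n/(\ell-1)\le np(1-p/2)$, is the crux; by comparison the middle is routine, since the matching and chaining only forfeit a negligible $o(t)$ fraction.
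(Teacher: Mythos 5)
Your proposal is correct, and its skeleton --- random two-colouring, colour-class concentration via Lemma~\ref{lem:sizes}, disjoint layers joined by near-perfect monochromatic matchings from Lemma~\ref{lem:matchings}, trivial upper bound --- is the same as the paper's; where you genuinely diverge is in how the path system is decomposed. You build all $\sim n/(\ell-1)$ paths in one round, with every layer of size $t/2$, and you concentrate all of the tension in the endpoint allocation, which you resolve by a Hall/min-cut argument on a four-role transportation problem; your identification of the binding cut (total demand $2t$ against $|N(u)\cup N(v)|\sim 2np(1-p/2)$, i.e.\ exactly the hypothesis) is right, and the other cuts indeed hold with slack of order $np^2$. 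The paper instead runs a two-phase construction: it first routes $\sim s/(\ell-3)$ paths whose middle layers exhaust the common non-neighbourhood $S=V\sm (N(x)\cup N(y))$ with $s\sim n(1-p)^2$, and then repeats the layered argument entirely inside the leftover neighbourhood $U'$ to extract $\sim k/(\ell-1)$ further paths, the two counts summing to $n/(\ell-1)+o(n)$ via the identity in \eqref{eqn:line12}. Notably, the paper never needs a Hall check, because its sets $X_B=(N_B(x)\sm N(y))\cup N_{BB}(x,y)$, $X_R$, $Y_B$, $Y_R$ are precisely an explicit balanced feasible solution to your transportation problem: common neighbours whose two edge colours agree are assigned to the $x$-side roles and those whose colours disagree to the $y$-side, so all four pools have size $\sim\frac{np}{2}\left(1-\frac{p}{2}\right)$ and the hypothesis $n/(\ell-1)\le np(1-p/2)$ directly leaves room for $t/2$ in each pool --- a one-line resolution of what you flagged as the crux. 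What each route buys: your single-phase version has the cleaner arithmetic (one check that $(\ell-1)t\le n$) at the price of the feasibility analysis; the paper's two-phase version trades that for slightly heavier bookkeeping in which every step is immediate. Two small points of hygiene that apply to your write-up: Lemma~\ref{lem:matchings} is stated for a fixed constant $\alpha$ and sets of size exactly $\alpha n$, so formally take $t=(1-\eps)n/(\ell-1)$ for fixed $\eps>0$ and let $\eps\to 0$ afterwards (the paper is equally informal here); and the matchings are needed in both colour subgraphs, each distributed as $\G(n,p/2)$, so apply the lemma to each colour and union bound.
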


\begin{proof} 
The upper bound on $\kappa_{r,\ell}(G)$ is obvious. For a matching lower bound, for each pair of vertices $\{x, y\}$ we will have to find a collection of paths using almost all other vertices in the graph. We will accomplish this by first covering  the mutual non-neighbors of $x, y$, and then covering the remaining vertices. We color each edge in $E = E(G)$ uniformly at random either red or blue. 
For any pair of vertices $x,y\in V$, consider the set $U = N(x)\cup N(y)$ and $S = V\setminus U $. Note that \whp~$|U| \sim 2np(1-p/2)$ and $|S| = s \sim n(1-p)^2$. 

Define disjoint sets
\begin{align*}
X_B &= (N_B(x)\setminus N(y))\cup N_{BB}(x,y),\\
X_R &= (N_R(x)\setminus N(y))\cup N_{RR}(x,y),\\
Y_B &= (N_B(y)\setminus N(x))\cup N_{RB}(x,y),\\
Y_R &= (N_R(y)\setminus N(x))\cup N_{BR}(x,y).
\end{align*}
and observe that by Lemma~\ref{lem:sizes}, we have that $|X_B| \sim |X_R|\sim|Y_B|\sim|Y_R|\sim \frac{np}{2}\left(1 - \frac{p}{2}\right)$. 

Now notice that
\begin{equation}\label{eq:dense:1}
np(1-p/2) \ge n/(\ell - 1)
\end{equation}
is equivalent to 
\begin{equation}\label{eq:dense:2}
(\ell - 3)np(1-  p/2 ) \ge n - 2np(1- p/2)
\end{equation}
and hence to 
\begin{equation}\label{eq:dense:3}
np(1- p/2 ) \ge n(1-p)^2/(\ell-3) \sim s/(\ell-3).
\end{equation}
Consequently, $\frac{np}{2}(1- \frac{p}{2} ) \ge  (1+o(1))\frac{s}{2(\ell-3)}$ 
and we can choose
\[
X_B' \subset X_B ,~ X_R' \subset X_R, ~Y_B' \subset Y_B, ~Y_R' \subset Y_R
\] 
such that $|X_B'| \sim |X_R'| \sim |Y_B'| \sim |Y_R'| \sim  \frac{s}{2(\ell - 3)}$.

Now we define a family of disjoint sets $\lbrace W_i\rbrace_{i=1}^{\ell-1}$ and $\lbrace Z_i\rbrace_{i=1}^{\ell-1}$ as follows.  First, we set $W_1 = X_R'$ and $Z_1 = X_B'$. Then for $\ell$ even we define $W_{\ell -1} = Y_B'$ and $Z_{\ell -1} = Y_R'$. For $\ell$ odd we define $W_{\ell -1} = Y_R'$ and  $Z_{\ell -1} = Y_B'$. Note that these sets are disjoint. Now for $2\le i \le \ell - 2$ we inductively and arbitrarily select $W_i,~ Z_i$ of size $s/2(\ell - 3)$ from the remaining vertices of $S$ such that $W_i$ and $Z_i$ are disjoint from the previously selected sets (see Figure~\ref{fig:dense}).

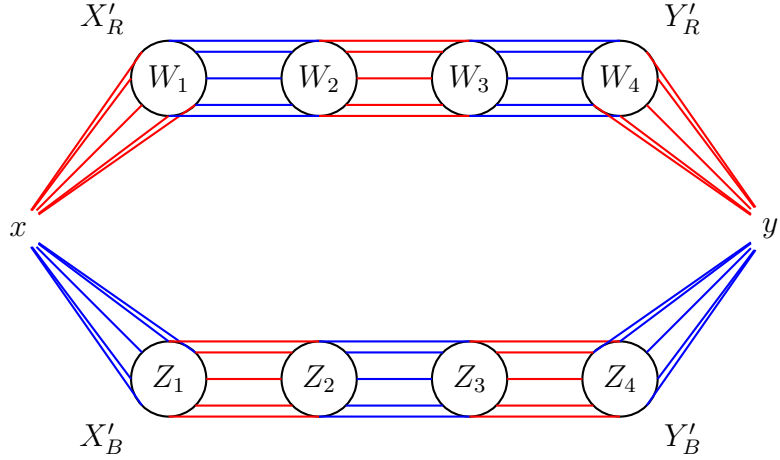
\begin{figure}

\begin{tikzpicture}[thick, black]

\node[black](x) at (0, 0){$x$};
\node[black](y) at (10, 0){$y$};

\begin{scope}
\foreach \i in {1,2,3,4}
	\node[black](w\i) at (\i*2,2) {$W_\i$};
\end{scope}

\node[black,label={[label distance=4mm]135:$X_R'$}](w1) at (2,2) {};
\node[black,label={[label distance=4mm]225:$X_B'$}](z1) at (2,-2) {};
\node[black,label={[label distance=4mm]45:$Y_R'$}](w4) at (8,2) {};
\node[black,label={[label distance=4mm]315:$Y_B'$}](z4) at (8,-2) {};

\begin{scope}
\foreach \i in {1,2,3,4}
	\draw[black] (w\i) circle (.5cm);
\end{scope}

\begin{scope}
\foreach \i in {1,2,3,4}
	\node[black](z\i) at (\i*2,-2) {$Z_\i$};
\end{scope}

\begin{scope}
\foreach \i in {1,2,3,4}
	\draw[black] (z\i) circle (.5cm);
\end{scope}



\begin{scope}
\foreach \theta in {135,180,225,270, 315, 0, 45, 90}
	\coordinate (a\theta) at ({.5*cos(\theta) + 2} ,{.5*sin(\theta) + 2});
\end{scope}

\begin{scope}
\foreach \theta in {135,180,225,270, 315, 0, 45, 90}
	\coordinate (b\theta) at ({.5*cos(\theta) + 4} ,{.5*sin(\theta) + 2});
\end{scope}

\begin{scope}
\foreach \theta in {135,180,225,270, 315, 0, 45, 90}
	\coordinate (c\theta) at ({.5*cos(\theta) + 6} ,{.5*sin(\theta) + 2});
\end{scope}

\begin{scope}
\foreach \theta in {135,180,225,270, 315, 0, 45, 90}
	\coordinate (d\theta) at ({.5*cos(\theta) + 8} ,{.5*sin(\theta) + 2});
\end{scope}


\begin{scope}
\foreach \theta in {135,180,225,270, 315, 0, 45, 90}
	\coordinate (e\theta) at ({.5*cos(\theta) + 2} ,{.5*sin(\theta) - 2});
\end{scope}

\begin{scope}
\foreach \theta in {135,180,225,270, 315, 0, 45, 90}
	\coordinate (f\theta) at ({.5*cos(\theta) + 4} ,{.5*sin(\theta) - 2});
\end{scope}

\begin{scope}
\foreach \theta in {135,180,225,270, 315, 0, 45, 90}
	\coordinate (g\theta) at ({.5*cos(\theta) + 6} ,{.5*sin(\theta) - 2});
\end{scope}

\begin{scope}
\foreach \theta in {135,180,225,270, 315, 0, 45, 90}
	\coordinate (h\theta) at ({.5*cos(\theta) + 8} ,{.5*sin(\theta) - 2});
\end{scope}



\begin{scope}
\foreach \theta in {135,180,225,270, 315}
	\draw[red] (a\theta) -- (x);
\end{scope}

\begin{scope}
\foreach \alpha/\beta in {270/270, 315/225, 0/180, 45/135, 90/90}
	\draw[blue] (a\alpha) -- (b\beta);
\end{scope}

\begin{scope}
\foreach \alpha/\beta in {270/270, 315/225, 0/180, 45/135, 90/90}
	\draw[red] (b\alpha) -- (c\beta);
\end{scope}

\begin{scope}
\foreach \alpha/\beta in {270/270, 315/225, 0/180, 45/135, 90/90}
	\draw[blue] (c\alpha) -- (d\beta);
\end{scope}

\begin{scope}
\foreach \theta in {225,45,0,315, 270}
	\draw[red] (d\theta) -- (y);
\end{scope}


\begin{scope}
\foreach \theta in {45, 90, 135, 180, 225}
	\draw[blue] (e\theta) -- (x);
\end{scope}

\begin{scope}
\foreach \alpha/\beta in {270/270, 315/225, 0/180, 45/135, 90/90}
	\draw[red] (e\alpha) -- (f\beta);
\end{scope}

\begin{scope}
\foreach \alpha/\beta in {270/270, 315/225, 0/180, 45/135, 90/90}
	\draw[blue] (f\alpha) -- (g\beta);
\end{scope}

\begin{scope}
\foreach \alpha/\beta in {270/270, 315/225, 0/180, 45/135, 90/90}
	\draw[red] (g\alpha) -- (h\beta);
\end{scope}

\begin{scope}
\foreach \theta in {135, 90, 45, 0, 315}
	\draw[blue] (h\theta) -- (y);
\end{scope}

\end{tikzpicture}

\caption{Finding the sets $W_i$ and $Z_i$ and corresponding matchings for $\ell=5$.}
\label{fig:dense}

\end{figure}

Now by Lemma \ref{lem:matchings} for each $1\le i \le \ell - 2$ we have \whp~an almost perfect matching between $W_i$ and $W_{i+1}$ as well as $Z_i$ and $Z_{i+1}$ of the appropriate color. That means for odd (even) $i$ we can \whp~find a blue (red) matching of size $s/(2(\ell -3)) + o(n)$ between $W_i$ and $W_{i+1}$ as well as a red (blue) matching of size $s/(2(\ell -3)) + o(n)$ between $Z_i$ and $Z_{i+1}$. This gives us a total of $s/(\ell-3) + o(n)$ alternating paths between $x$ and $y$, covering almost all vertices of $S$ (and also covering some neighbors of $x, y$).

Now we apply this same argument to the set $U' = (X_R\setminus X_R')\cup (X_B\setminus X_B')\cup (Y_R\setminus Y_R')\cup  (Y_B\setminus Y_B')$. Let $k:=|U'| =  2np(1-p/2) - 2s/{(\ell - 3)} + o(n)$. If $k=o(n)$, then $2np(1-p/2) \sim 2s/{(\ell - 3)}$ and by the previous calculations~\eqref{eq:dense:1}-\eqref{eq:dense:3} we obtain that $s/(\ell - 3) \sim n/(\ell - 1)$ yielding $(1+o(1))n/(\ell - 1)$ alternating paths between $x$ and $y$. 

Assume that $k=\Omega(n)$ and note that $|X_B\setminus X_B'| \sim |X_R\setminus X_R'|\sim|Y_B\setminus Y_B'|\sim|Y_R\setminus Y_R'|\sim k/4$. Hence, we can find sets $$X_B'' \subset X_B\setminus X_B' ,~ X_R'' \subset X_R\setminus X_R', ~Y_B'' \subset Y_B\setminus Y_B', ~Y_R'' \subset Y_R\setminus Y_R' $$ such that $|X_B''| = |X_R''| = |Y_B''| = |Y_R''| =  \frac{k}{2(\ell - 1)}$.

As before, we define a family of disjoint sets $\lbrace W_i'\rbrace_{i=1}^{\ell-1}$ and $\lbrace Z_i'\rbrace_{i=1}^{\ell-1}$. We set $W_1' = X_R''$ and $Z_1' = X_B''$. Then for $\ell$ even we define $W_{\ell -1}' = Y_B''$ and $Z_{\ell -1}' = Y_R''$ and for $\ell$ odd $W_{\ell -1}' = Y_R''$ and  $Z_{\ell -1}' = Y_B''$. Now for $2\le i \le \ell - 2$ we inductively and arbitrarily select $W_i',~ Z_i'$ of size $m/2=\frac{k}{2(\ell - 1)}$ from the remaining vertices of $U'$ such that $W_i'$ and $Z_i'$ are disjoint from the previously selected sets. Let us observe that we were able to find all disjoint sets $W_i$, $Z_i$, $W_i'$ and $Z_i'$ since
\begin{align}
2(\ell-1) &\frac{s}{2(\ell-3)} + 2(\ell-1) \frac{k}{2(\ell-1)}
= s\left(1+\frac{2}{\ell-3}\right)+k\nonumber\\
&= s+2np(1-p/2) = n(1-p)^2 +2np(1-p/2) +o(n)= n+o(n). \label{eqn:line12}
\end{align}

Again invoking Lemma \ref{lem:matchings}, for odd (even) $i$ we can \whp~find a blue (red) matching of size $\frac{k}{2(\ell - 1)} + o(n)$ between $W_i'$ and $W_{i+1}'$ as well as a red (blue) matching of size $\frac{k}{2(\ell - 1)} + o(n)$ between $Z_i'$ and $Z_{i+1}'$. So altogether we have found $\frac{k}{\ell - 1} + o(n)$ additional alternating paths between $x$ and $y$. Taken together with the other paths we have found
\begin{align*}
\frac{k}{\ell - 1} + \frac{s}{\ell - 3} + o(n) & = \frac{n}{\ell - 1} + o(n)
\end{align*}
alternating paths between $x$ and $y$. Note that the above equality holds by dividing \eqref{eqn:line12} by $\l-1$.
\end{proof}

\begin{lemma}\label{lem:kappa_2ell:c2}
Let $0 < p < 1$ be a constant and $G = \G(n,p)$. Then for any integer $\ell\ge 3$ satisfying $n/(\ell - 1) \ge  np(1-p/2)$ for sufficiently large $n$, \whp 
\[
\kappa_{r,\ell}(G) \sim np\left(1-\frac{p}{2}\right).
\]
\end{lemma}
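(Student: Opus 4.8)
The plan is to prove the upper and lower bounds separately, with the upper bound being routine and the lower bound mirroring the construction of Lemma~\ref{lem:kappa_2ell:c1}, except that now we route \emph{all} of the boundary vertices into paths and absorb the interior vertices into the set $S$ of mutual non-neighbors.

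For the upper bound, observe that any path of length $\ell\ge 3$ joining $x$ and $y$ uses a vertex of $N(x)$ as its second vertex and a \emph{distinct} vertex of $N(y)$ as its second-to-last vertex. Since the paths are internally disjoint, a family of such paths uses at least two distinct vertices of $N(x)\cup N(y)$ per path, so their number is at most $\tfrac12|N(x)\cup N(y)|$. A standard Chernoff-plus-union-bound argument (degrees $\sim np$ and codegrees $\sim np^2$, both concentrated since $p$ is constant) shows that \whp~$|N(x)\cup N(y)|\sim 2np(1-p/2)$ for every pair $x,y$, giving $\kappa_{r,\ell}(G)\le (1+o(1))np(1-p/2)$.

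For the lower bound I would color every edge red or blue uniformly at random (two colors suffice for any $r\ge2$) and, for a fixed pair $x,y$, reuse the four disjoint sets $X_B,X_R,Y_B,Y_R$ from Lemma~\ref{lem:kappa_2ell:c1}, which by Lemma~\ref{lem:sizes} each have size $\sim \tfrac{np}{2}(1-p/2)$. Each vertex of an $X$-set is a valid first step out of $x$ and each vertex of a $Y$-set a valid last step into $y$, and together they exhaust $N(x)\cup N(y)$ up to $o(n)$; this is exactly what we want, since we aim to build $\sim np(1-p/2)$ paths, each consuming two boundary vertices. As in Lemma~\ref{lem:kappa_2ell:c1} I would set up two chains $W_1,\dots,W_{\ell-1}$ and $Z_1,\dots,Z_{\ell-1}$ with $W_1=X_R$, $Z_1=X_B$, and $W_{\ell-1},Z_{\ell-1}$ chosen among $Y_R,Y_B$ according to the parity of $\ell$ so that the colors alternate, filling the interior sets $W_2,\dots,W_{\ell-2},Z_2,\dots,Z_{\ell-2}$ with disjoint blocks of size $\tfrac{np}{2}(1-p/2)$ taken from $S=V\setminus(N(x)\cup N(y))$. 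The key feasibility check is that $S$ is large enough: we need $(\ell-3)\cdot np(1-p/2)$ interior vertices, and the hypothesis $n/(\ell-1)\ge np(1-p/2)$ rearranges (exactly as in \eqref{eq:dense:1}--\eqref{eq:dense:3}) to $(\ell-3)p(1-p/2)\le (1-p)^2$, so the demand is at most $|S|\sim n(1-p)^2$.

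With the sets in place I would invoke Lemma~\ref{lem:matchings} on the red graph $\G(n,p/2)$ and the blue graph $\G(n,p/2)$ to find, for every $i$, an almost-perfect matching of the prescribed color between the consecutive sets $W_i,W_{i+1}$ (and $Z_i,Z_{i+1}$). Since Lemma~\ref{lem:matchings} holds \whp~uniformly over all pairs of sets of the relevant constant density, a single application per color covers every pair $x,y$ and every block we select, so no further union bound over pairs is required; moreover the matching edges avoid $x,y$ and are therefore independent of the edges defining the sets. Stitching these matchings together yields the paths. I expect the main obstacle to be bookkeeping rather than any single hard estimate: one must verify the color-alternation pattern along each chain terminates with the correct last edge into $y$ for both parities of $\ell$, and that composing the $\ell-2$ near-perfect matchings into full-length paths costs only $o(n)$ paths (each matching saturates all but $o(n)$ of its set, and $\ell$ is a fixed constant, so the losses accumulate to $o(n)$). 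Each of the two chains then delivers $\sim\tfrac{np}{2}(1-p/2)$ internally disjoint alternating paths, for a total of $\sim np(1-p/2)$. The case $\ell=3$ is the degenerate instance with no interior sets, where one simply takes a red matching between $X_B$ and $Y_B$ and a blue matching between $X_R$ and $Y_R$.
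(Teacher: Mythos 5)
Your proposal is correct and follows essentially the same route as the paper's proof: the same neighborhood-union upper bound, the same uniform red/blue coloring with the four sets $X_B,X_R,Y_B,Y_R$ from Lemma~\ref{lem:sizes}, and the same chains $W_i,Z_i$ stitched together by alternating near-perfect matchings from Lemma~\ref{lem:matchings}. Your feasibility check $(\ell-3)\,np(1-p/2)\le n(1-p)^2\sim |S|$ is just an equivalent rearrangement of the paper's count $2(\ell-1)\cdot\frac{np}{2}\left(1-\frac{p}{2}\right)\le n$, and your explicit handling of $\ell=3$ agrees with the degenerate instance of the general construction.
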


\begin{proof} For the upper bound, take any pair of vertices $x,y\in V$ and consider the set $U = N(x)\cup N(y)$. Observe that \whp~$|U|  =  2np(1-p/2) + o(n)$. Furthermore, since the interior of every path of length $\ell$ from $x$ to $y$ must use two vertices from $U$ and $\ell-3$ vertices from elsewhere, the largest possible number of disjoint $xy$-paths possible is $np(1-\frac p2) + o(n)$. 

Now we show the lower bound.
Color each edge in $E$ uniformly at random either red or blue and define disjoint sets
\begin{align*}
X_B = (N_B(x)\setminus N(y))\cup N_{BB}(x,y),\\
X_R = (N_R(x)\setminus N(y))\cup N_{RR}(x,y),\\
Y_B = (N_B(y)\setminus N(x))\cup N_{RB}(x,y),\\
Y_R = (N_R(y)\setminus N(x))\cup N_{BR}(x,y).
\end{align*}
Then by Lemma \ref{lem:sizes}, we have that $|X_B| \sim |X_R|\sim|Y_B|\sim|Y_R|\sim \frac{np}{2}\left(1 - \frac{p}{2}\right)$. We now apply a similar argument to the proof of Lemma \ref{lem:kappa_2ell:c1}. Define $\lbrace W_i\rbrace_{i=1}^{\ell-1}$ and $\lbrace Z_i\rbrace_{i=1}^{\ell-1}$ of size $(np/2)(1-p/2) + o(n)$. First, we set $W_1 = X_R$ and $Z_1 = X_B$. Then for $\ell$ even we define $W_{\ell -1} = Y_B$ and $Z_{\ell -1} = Y_R$; for $\ell$ odd we define $W_{\ell -1} = Y_R$ and $Z_{\ell -1} = Y_B$. Note that these sets are disjoint. Now for $2\le i \le \ell - 2$ we inductively and arbitrarily select $W_i,~ Z_i$ of size $\sim (np/2)(1-p/2)$ from the remaining vertices such that $W_i$ and $Z_i$ are disjoint from the previously selected sets. We are able to find such sets $W_i$ and $Z_i$, since
\[
2(\ell-1)  \frac{np}{2}\left(1 - \frac{p}{2}\right)
= (\ell-1)\cdot  np\left(1 - \frac{p}{2}\right)
\le (\ell-1)\cdot \frac{n}{\ell-1} = n.
\]

We apply Lemma \ref{lem:matchings} to find matchings of the appropriate color between the sets we have defined. For each $1\le i \le \ell - 2$ and odd (even) $i$ we can \whp~find a blue (red) matching of size $(np/2)(1-p/2) + o(n)$ between $W_i$ and $W_{i+1}$ as well as a red (blue) matching of size $(np/2)(1-p/2) + o(n)$ between $Z_i$ and $Z_{i+1}$. This gives us $np(1-p/2) + o(n)$ alternating paths between $x$ and $y$. 

\end{proof}


\section{Alternating paths of length at least three in sparse random graphs}

\noindent In this section we investigate the sparser case $p = o(1)$ and prove Theorem~\ref{thm:kappa-sparse} (stated for convenience below).

\begin{theorem*}{\bf \ref{thm:kappa-sparse}}
Suppose $G = \G(n,p)$ with $p=o(1)$ and $r\ge 2$ is an integer.
\begin{enumerate}[(i)]
\item Let $k\ge 2$ be a positive integer such that $n^{1/k} \le np \le n^{1/(k-1)}$. If $\ell \ge k+2$, then 
\whp~we have $\kappa_{r,\ell}(G)\sim np$.
\item Let $k\ge 2$ be a positive integer such that $(n\log n)^{1/k} \ll np \le n^{1/(k-1)}$. If $\ell = k+1$, then 
\whp~we have $\kappa_{r,\ell}(G)\sim np$.
\item Let $k\ge 3$ be a positive integer such that $(n\log n)^{1/k} \ll np \ll n^{1/(k-1)}$. If $\ell = k$, then 
\whp~we have $\kappa_{r,\ell}(G)= \Theta(n^{k-1}p^k)$.
\end{enumerate}
\end{theorem*}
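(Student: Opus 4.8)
The three parts share a common skeleton, so my plan is to set up one framework and then track how the parameters specialize. For the upper bounds I would argue as follows. In parts (i) and (ii) the bound $\kappa_{r,\ell}(G)\le (1+o(1))np$ is immediate from the maximum degree: any family of internally disjoint $x$--$y$ paths uses distinct neighbors of $x$, and a Chernoff bound \eqref{Chernoff_upper} together with a union bound over the $n$ vertices shows that \whp every degree is $(1+o(1))np$ (here $np\ge n^{1/k}\gg\log n$). In part (iii), where $\ell=k$ equals the diameter, the binding constraint is instead the total number of length-$k$ paths between a pair; I would show that \whp, simultaneously for all pairs $x,y$, this number is $O(n^{k-1}p^k)$, since a length-$k$ path has $k-1$ ordered internal vertices and $k$ edges, so its expectation is $\Theta(n^{k-1}p^k)=\Theta((np)^k/n)\gg\log n$. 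The concentration needed to upgrade this expectation to a statement uniform over all $\binom n2$ pairs is the genuinely delicate point of the upper bound; I would obtain it by an inductive degree-counting argument, bounding the number of length-$j$ paths between every ordered pair for $j=1,\dots,k$ using the degree and codegree estimates in the spirit of Lemma~\ref{lem:sizes} and \eqref{Chernoff_upper}, each step contributing a factor $(1+o(1))np$.

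For the lower bounds in parts (i) and (ii) I would use a ``meet in the middle'' construction. Fix $x,y$, color $E(G)$ uniformly at random with two colors, and grow two layered, alternately colored structures rooted at the $\sim np$ neighbors of $x$ on one side and the $\sim np$ neighbors of $y$ on the other. Each neighbor spawns a vertex-disjoint subtree built by repeatedly applying the $d$-matching of Lemma~\ref{lem:nbhs} restricted to edges of the prescribed color (which only changes $d=np/4$ by a constant factor and is harmless for $\ell\ge k+1$), producing frontiers $F_x,F_y$ partitioned into $\sim np$ parts. The paths are completed by a matching between $F_x$ and $F_y$; to respect alternation I would arrange that every frontier vertex is reached by a red edge and use only blue edges in this middle matching. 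Since we need the sharp constant $np$, the middle matching must cover all but $o(np)$ of the parts, and this is exactly where the hypotheses enter: the depth budget $a+b=\ell-1$ together with $(np)^k\gg n\log n$ (part (ii)) or $(np)^k\ge n$ with the extra length $\ell\ge k+2$ (part (i)) guarantees that the part-contracted bipartite graph has edge probability large enough for a near-perfect matching via Lemma~\ref{lem:omega} (or Lemma~\ref{lem:matchings} when it is essentially complete). The case $k=2$ of part (ii) is the base case, where the frontiers are simply $N(x),N(y)$ and the single middle matching is near-perfect precisely because $(np)^2\gg n\log n$.

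Part (iii) is different because $\ell$ equals the diameter. Here I would run the same construction with $a+b+1=k$, but now the bipartite graph between the two frontiers is \emph{sparse}: its expected number of edges is $\Theta(n^{k-1}p^k)$, which is $o$ of each frontier, so its maximum matching has size of the order of its edge count rather than saturating a side. This simultaneously caps the number of realizable disjoint paths at $O(n^{k-1}p^k)$ (matching the upper bound) and, because in this regime $(np)^{k-1}\ll n$ forces two length-$k$ $x$--$y$ paths to share an internal vertex only rarely, allows one to extract $\Omega(n^{k-1}p^k)$ genuinely internally disjoint alternating paths; this is why we obtain only $\Theta$ and not $\sim$. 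I would make ``rarely'' precise by bounding the expected number of pairs of length-$k$ $x$--$y$ paths sharing an internal vertex and deleting one path from each conflict, losing only a constant fraction.

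Throughout, each per-pair statement must fail with probability $o(n^{-2})$ so that the union bound over pairs survives, which is why the auxiliary lemmas are stated with failure probabilities $1/n^3$ and $1/m^C$. The main obstacle, in all three parts, is the middle connection together with these uniform-over-all-pairs concentration requirements: keeping the many half-paths vertex-disjoint while still guaranteeing a large matching between the frontiers (near-perfect in (i) and (ii), of edge-count size in (iii)), and simultaneously enforcing the alternation of colors across the junction. The sparse regime of part (iii), where the matching size and the path-count upper bound must be shown to coincide up to constants, is the hardest to pin down.
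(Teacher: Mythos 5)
Your framework for the lower bounds (layered, alternately colored trees grown from $u$ and $v$ via color-restricted $d$-matchings, a contracted bipartite ``class'' graph between frontiers, near-perfect matchings via Lemma~\ref{lem:omega}, per-pair failure probability $o(n^{-2})$) is exactly the paper's construction, and your accounting of where the hypotheses $(np)^k\gg n\log n$ versus $(np)^k\ge n$ plus extra length enter is right. But there is a genuine gap in your upper bound for part (iii). You propose to show that \whp~\emph{simultaneously for all pairs} the number of length-$k$ paths is $O(n^{k-1}p^k)$, via an inductive degree-count in which ``each step contributes a factor $(1+o(1))np$.'' That induction cannot work: per-step degree counting bounds walks from $x$, and since the final step must land on the specific vertex $y$, the best it yields per pair is $\Delta^{k-1}=(1+o(1))(np)^{k-1}=n^{k-1}p^{k-1}$, a factor $1/p\to\infty$ above the target --- degree counting has no mechanism to charge the probability-$p$ cost of the last edge. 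Nor can you induct on per-pair counts of length-$j$ paths for $j<k$: in this regime $n^{j-1}p^j=(np)^j/n=o(1)$ for $j\le k-1$, yet many pairs at distance $j$ have at least one such path, so no uniform bound of the form $O(n^{j-1}p^j)$ holds at intermediate lengths. The fix is to notice the theorem does not need a uniform bound at all: $\kappa_{r,\ell}(G)$ is capped by the \emph{worst} pair, so an averaging argument suffices. The paper counts all length-$k$ paths greedily --- \whp~every vertex has degree $\sim np$, so there are $O(n(np)^k)=O(n^{k+1}p^k)$ such paths in total --- and concludes that some pair is joined by at most $O(n^{k+1}p^k)/\binom n2=O(n^{k-1}p^k)$ of them. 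Your ``genuinely delicate point'' thus evaporates.

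Two secondary remarks. First, for the lower bound in (iii) the paper is simpler than your frontier-to-frontier sparse matching with conflict deletion: it grows trees only from $u$ (sets $X_i,X_i'$, $1\le i\le k-1$) and forms a \emph{star} $H(X,\{v\})$, whose edge count is binomial with mean $\Theta(np\cdot n^{k-2}p^{k-1})=\Theta(n^{k-1}p^k)\gg\log n$; since the classes are vertex-disjoint trees, distinct star edges give internally disjoint paths with no deletion step. Your variant is workable (expected conflicts are $O(\mu^2/(np))\ll\mu$ because $(np)^{k-1}\ll n$), but it is extra machinery. Second, you gloss over the volume constraints near the top of the range $np\le n^{1/(k-1)}$: with branching factor $np/8$, the frontiers can exceed $n$ (already for $k=2$ in part (i), where a second layer would have size $\Theta((np)^2)\ge n/8$). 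The paper handles this by capping growth --- Lemma~\ref{lem:nbhs_2} with stars of size $\Theta(1/p)$ for $k=2$, checking $|Y_{\lfloor k/2\rfloor+1}|\ll n$ for $3\le k\le 4$, and, for $\ell=k+a$ with $a>2$, extending depth while selecting only one vertex per star so frontier sizes stay constant. Your parenthetical that the color restriction ``only changes $d$ by a constant factor'' does not address this blow-up, and part (i) as you sketched it fails for $k=2$ without such a device.
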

\noindent
Observe that for $k=2$ in parts \eqref{thm:sparse:ii} and \eqref{thm:sparse:i} condition $p=o(1)$ implies that $np \ll n$. Also notice that in part \eqref{thm:sparse:iii} we may assume that $k\ge 3$; the case $k=2$ follows from Theorem~\ref{thm:kappa_r2}.

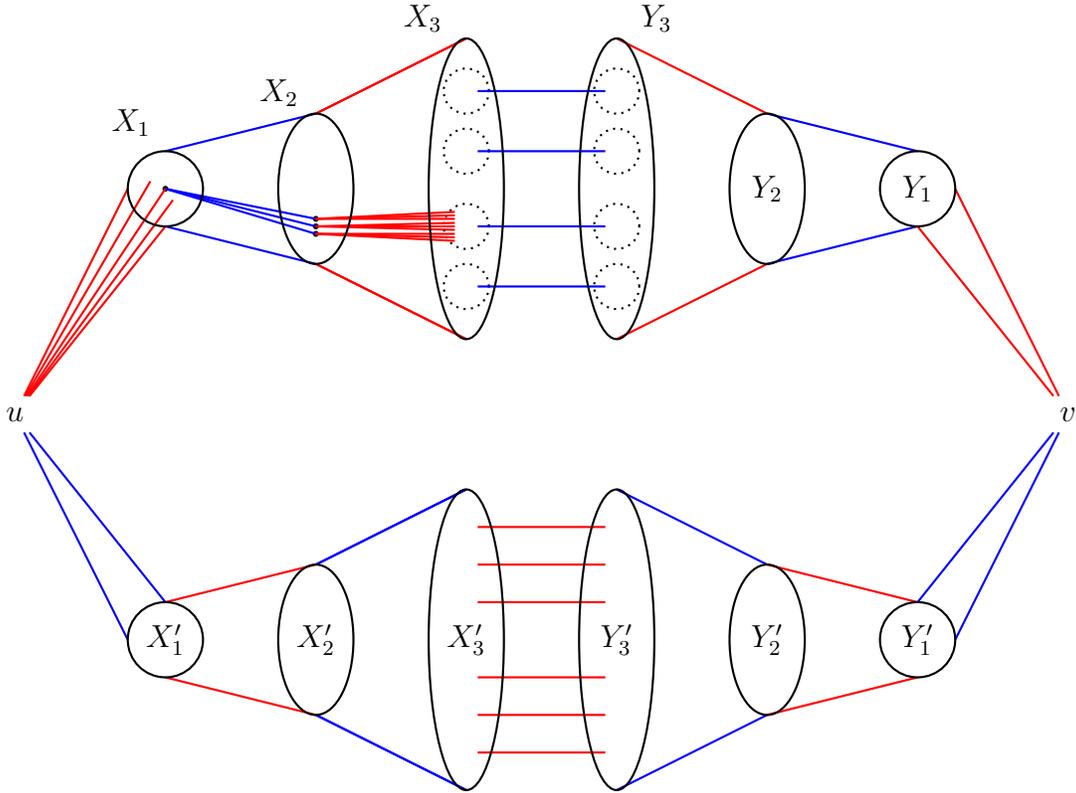
\begin{figure}[h]
\centering
\begin{tikzpicture}[thick, black]

\node[black](x) at (0, 0){$u$};
\node[black](y) at (14, 0){$v$};

\node[black, label={[label distance=.4cm]95:$X_1$}](X1) at (2, 3){};
\node[black, label={[label distance=.8cm]95:$X_2$}](X2) at (4, 3){};
\node[black, label={[label distance=1.8cm]95:$X_3$}](X3) at (6, 3){};

\node[black](Y1) at (12, 3){$Y_1$};
\node[black](Y2) at (10, 3){$Y_2$};
\node[black, label={[label distance=1.8cm]85:$Y_3$}](Y3) at (8, 3){};

\node[black](X1') at (2, -3){$X_1'$};
\node[black](X2') at (4, -3){$X_2'$};
\node[black](X3') at (6, -3){$X_3'$};

\node[black](Y1') at (12, -3){$Y_1'$};
\node[black](Y2') at (10, -3){$Y_2'$};
\node[black](Y3') at (8, -3){$Y_3'$};

\node[black, coordinate](X1t) at (2, 3.5){};
\node[black, coordinate](X1b) at (2, 2.5){};
\node[black, coordinate](X2t) at (4, 4){};
\node[black, coordinate](X2b) at (4, 2){};
\node[black, coordinate](X3t) at (6, 5){};
\node[black, coordinate](X3b) at (6, 1){};
\node[black, coordinate](Y3t) at (8, 5){};
\node[black, coordinate](Y3b) at (8, 1){};
\node[black, coordinate](Y2t) at (10, 4){};
\node[black, coordinate](Y2b) at (10, 2){};
\node[black, coordinate](Y1t) at (12, 3.5){};
\node[black, coordinate](Y1b) at (12, 2.5){};

\draw[blue] (X1t) -- (X2t);
\draw[blue] (X1b) -- (X2b);

\draw[red] (X2t) -- (X3t);
\draw[red] (X2b) -- (X3b);

\draw[red] (X2t) -- (X3t);
\draw[red] (X2b) -- (X3b);

\draw[blue] (Y1t) -- (Y2t);
\draw[blue] (Y1b) -- (Y2b);

\draw[red] (Y2t) -- (Y3t);
\draw[red] (Y2b) -- (Y3b);

\node[black, coordinate](X1t') at (2, -3.5){};
\node[black, coordinate](X1b') at (2, -2.5){};
\node[black, coordinate](X2t') at (4, -4){};
\node[black, coordinate](X2b') at (4, -2){};
\node[black, coordinate](X3t') at (6, -5){};
\node[black, coordinate](X3b') at (6, -1){};
\node[black, coordinate](Y3t') at (8, -5){};
\node[black, coordinate](Y3b') at (8, -1){};
\node[black, coordinate](Y2t') at (10, -4){};
\node[black, coordinate](Y2b') at (10, -2){};
\node[black, coordinate](Y1t') at (12, -3.5){};
\node[black, coordinate](Y1b') at (12, -2.5){};

\draw[red] (X1t') -- (X2t');
\draw[red] (X1b') -- (X2b');

\draw[blue] (X2t') -- (X3t');
\draw[blue] (X2b') -- (X3b');

\draw[blue] (X2t') -- (X3t');
\draw[blue] (X2b') -- (X3b');

\draw[red] (Y1t') -- (Y2t');
\draw[red] (Y1b') -- (Y2b');

\draw[blue] (Y2t') -- (Y3t');
\draw[blue] (Y2b') -- (Y3b');

\node[black, coordinate](X1l) at (1.5, 3){};
\node[black, coordinate](Y1r) at (12.5, 3){};
\node[black, coordinate](X1l') at (1.5, -3){};
\node[black, coordinate](Y1r') at (12.5, -3){};

\draw[red] (x) -- (X1l);
\draw[red] (x) -- (X1b);
\draw[red] (y) -- (Y1r);
\draw[red] (y) -- (Y1b);

\draw[blue] (x) -- (X1l');
\draw[blue] (x) -- (X1b');
\draw[blue] (y) -- (Y1r');
\draw[blue] (y) -- (Y1b');


\node[black](L1) at (6, 4.3){};
\node[black](L2) at (6, 4){};
\node[black](L3) at (6, 3.5){};

\node[black](L4) at (6, 2.5){};
\node[black](L5) at (6, 2){};
\node[black](L6) at (6, 1.7){};

\node[black](R1) at (8, 4.3){};
\node[black](R2) at (8, 4){};
\node[black](R3) at (8, 3.5){};

\node[black](R4) at (8, 2.5){};
\node[black](R5) at (8, 2){};
\node[black](R6) at (8, 1.7){};

\draw[black, dotted] (L1) circle [radius = 3mm];
\draw[black, dotted] (L3) circle [radius = 3mm];

\draw[black, dotted] (L4) circle [radius = 3mm];
\draw[black, dotted] (L6) circle [radius = 3mm];

\draw[black, dotted] (R1) circle [radius = 3mm];
\draw[black, dotted] (R3) circle [radius = 3mm];

\draw[black, dotted] (R4) circle [radius = 3mm];
\draw[black, dotted] (R6) circle [radius = 3mm];


\node[black, coordinate](S) at (4, 2.5){};
\node[black](S1) at (6, 2.45){};
\node[black](S2) at (6, 2.5){};
\node[black](S3) at (6, 2.55){};

\node[black](S4) at (6, 2.4){};
\node[black](S5) at (6, 2.35){};
\node[black](S6) at (6, 2.3){};

\node[black](S7) at (6, 2.6){};
\node[black](S8) at (6, 2.65){};
\node[black](S9) at (6, 2.7){};

\node[black, coordinate](T) at (2, 3){};
\node[black, coordinate](T1) at (4, 2.4){};
\node[black, coordinate](T2) at (4, 2.5){};
\node[black, coordinate](T3) at (4, 2.6){};

\draw[black, fill=black] (T) circle [radius = .25mm];
\draw[black, fill=black] (S) circle [radius = .25mm];
\draw[black, fill=black] (T1) circle [radius = .25mm];
\draw[black, fill=black] (T3) circle [radius = .25mm];

\draw[blue] (T) -- (T1);
\draw[blue] (T) -- (T2);
\draw[blue] (T) -- (T3);

\node[black, coordinate](Tl) at (1.8, 3.1){};
\node[black, coordinate](Tb) at (2.1, 2.85){};

\draw[red] (x) -- (T);
\draw[red] (x) -- (Tl);
\draw[red] (x) -- (Tb);

\draw[blue] (L1) -- (R1);
\draw[blue] (L3) -- (R3);
\draw[blue] (L4) -- (R4);
\draw[blue] (L6) -- (R6);

\draw[red] (S) -- (S1);
\draw[red] (S) -- (S2);
\draw[red] (S) -- (S3);

\draw[red] (T1) -- (S4);
\draw[red] (T1) -- (S5);
\draw[red] (T1) -- (S6);

\draw[red] (T3) -- (S7);
\draw[red] (T3) -- (S8);
\draw[red] (T3) -- (S9);

\node[black](L1') at (6, -4.5){};
\node[black](L2') at (6, -4){};
\node[black](L3') at (6, -3.5){};

\node[black](L4') at (6, -2.5){};
\node[black](L5') at (6, -2){};
\node[black](L6') at (6, -1.5){};

\node[black](R1') at (8, -4.5){};
\node[black](R2') at (8, -4){};
\node[black](R3') at (8, -3.5){};

\node[black](R4') at (8, -2.5){};
\node[black](R5') at (8, -2){};
\node[black](R6') at (8, -1.5){};

\draw[red] (L1') -- (R1');
\draw[red] (L2') -- (R2');
\draw[red] (L3') -- (R3');
\draw[red] (L4') -- (R4');
\draw[red] (L5') -- (R5');
\draw[red] (L6') -- (R6');

\draw (X1) circle [x radius=.5cm, y radius=.5cm];
\draw (X2) circle [x radius=.5cm, y radius=1cm];
\draw (X3) circle [x radius=.5cm, y radius=2cm];

\draw (X1') circle [x radius=.5cm, y radius=.5cm];
\draw (X2') circle [x radius=.5cm, y radius=1cm];
\draw (X3') circle [x radius=.5cm, y radius=2cm];

\draw (Y1) circle [x radius=.5cm, y radius=.5cm];
\draw (Y2) circle [x radius=.5cm, y radius=1cm];
\draw (Y3) circle [x radius=.5cm, y radius=2cm];

\draw (Y1') circle [x radius=.5cm, y radius=.5cm];
\draw (Y2') circle [x radius=.5cm, y radius=1cm];
\draw (Y3') circle [x radius=.5cm, y radius=2cm];

\end{tikzpicture}

\caption{The construction for $\ell = 7$}
\label{fig:sparse}

\end{figure}

\begin{proof}
Due to the low probabilities involved, our strategy is more delicate than in the dense case. Given two fixed vertices, our program is to progressively reveal their colored neighborhoods in an alternating fashion. Due to the sparseness of the graph, these sets are small and can be taken to be disjoint with only minor adjustments. Then we use auxiliary bipartite graphs to find a matching of the appropriate size between the last neighborhoods. 

First we prove \textbf{part~(\ref{thm:sparse:i})}. Let $k\ge 2$ be a positive integer such that $(n\log n)^{1/k} \ll np \le n^{1/(k-1)}$ and $\ell = k+1$. Clearly, $\kappa_{r,\ell}(G)$ is bounded above by the minimum degree, but a Chernoff argument tells us that all degrees in $G$ are \whp~concentrated around the mean of $np(1+o(1))$. So it suffices to establish the lower bound.

We color each edge of $G$ uniformly and independently from the colors red and blue. Fix two vertices $u$ and $v$. We estimate the probability of finding $np(1+o(1))$ disjoint, red-blue paths between $u$ and $v$ and then take the union bound over all choices for $u, v$. To that end, we construct the following sequences of disjoint subsets of $V$: for $1 \le i \le \lceil k / 2 \rceil$ the sets $X_i$, $X_i'$; and for $1 \le i \le \lfloor k / 2 \rfloor$ the sets $Y_i$, $Y_i'$. We require these sequences to have the following properties:

\begin{itemize}
\item The sets $X_1$ and $X_1'$ correspond to the red and blue neighborhoods of $u$ and $Y_1$ and $Y_1'$ correspond to the red and blue neighborhoods of $v$. Hence, by a simple Chernoff argument, their orders $x_1, x_1', y_1, y_1'$ are $\sim np/2$.
\item For $i\ge 2$ we have $x_i = x_1(np/8)^{i-1}$ and there is an $np/8$-matching from $X_{i-1}$ to $X_{i}$ consisting of blue edges for even $i$ and red for odd $i$. Similarly, we have $x_i' = x_1'(np/8)^{i-1}$ and there is an $np/8$-matching from $X_{i-1}'$ to $X_{i}'$ consisting of red edges for even $i$ and blue for odd $i$.
\item For $i\ge 2$ we have $y_i = y_1(np/8)^{i-1}$ and there is an $np/8$-matching from $Y_{i-1}$ to $Y_{i}$ consisting of blue edges for even $i$ and red for odd $i$. Similarly, we have $y_i' = y_1'(np/8)^{i-1}$ and there is an $np/8$-matching from $Y_{i-1}'$ to $Y_{i}'$ consisting of red edges for even $i$ and blue for odd $i$.
\end{itemize}

To construct the sets $X_i$, $X_i'$, $Y_i$, $Y_i'$ we inductively use Lemma \ref{lem:nbhs} by first finding the $X_i$'s and then finding the $Y_i$'s. First, we initialize $X_1$, $X_1'$, $Y_1$, $Y_1'$ as the appropriately colored neighborhoods of $u$ and $v$. Then for $2 \le i \le \lceil k / 2 \rceil - 1$ (this necessarily means that $k\ge 3$), define   
$$
A = X_{i-1}, \quad B = V\setminus \cup_{j = 1}^{i-1} \left(X_j \cup X_j' \cup Y_1 \cup Y_1'\right).
$$

Notice that $A$ has order at most $(np)^{\lceil k /2 \rceil - 2} \le (np)^{k-2}$ and $B$ has order $\sim n$ as required by Lemma \ref{lem:nbhs}. So viewing $A\cup B$ as a random bipartite graph with colored edge probability $p/2$, applying the lemma gives that there exists an $(np/8)$-matching from $A$ to $B$ that saturates $A$ with a failure probability at most $1/n^3$. Set the image of this matching in $B$ to be $X_i$, and observe that $x_i = x_{i-1}(np/8) = x_1(np/8)^{i-1}$. Repeat this process with $A' \cup B'$ for 
$$
A' = X_{i-1}', \quad B' = V\setminus \cup_{j = 1}^{i-1} \left(X_j \cup X_j' \cup Y_1 \cup Y_1'\right)\cup X_i.
$$

Thus, we obtain the sets $X_i$ for $1 \le i \le \lceil k / 2 \rceil$ with the desired property and with a failure probability of $O(1/n^3)$. In a similar manner, for $2 \le i \le \lfloor k / 2 \rfloor$ we obtain $Y_i$, $Y_i'$ with a failure probability of $O(1/n^3)$.

In order to complete our search for $np(1+o(1))$ disjoint, red-blue paths between $u$ and $v$, we find $np(1+o(1))$ correctly colored edges between the sets $X_{\lceil k / 2 \rceil}, X_{\lceil k / 2 \rceil}'$ and $Y_{\lfloor k / 2 \rfloor}, Y_{\lfloor k / 2 \rfloor}'$. If $k$ is odd, we find red edges between $X_{\lceil k / 2 \rceil}$ and $Y_{\lfloor k / 2 \rfloor}'$ as well as blue edges between $X_{\lceil k / 2 \rceil}'$ and $Y_{\lfloor k / 2 \rfloor}$. If $k$ is even, we find red edges between $X_{\lceil k / 2 \rceil}$ and $Y_{\lfloor k / 2 \rfloor}$ as well as blue edges between $X_{\lceil k / 2 \rceil}'$ and $Y_{\lfloor k / 2 \rfloor}'$. For convenience, we will denote $X$ and $Y$ to be the sets with red edges between and $X'$ and $Y'$ to be the sets with blue edges between.

We now construct a random auxiliary bipartite graph $H(X,Y)$ by partitioning $X$ into $\sim np / 2$ disjoint sets of size $(np/8)^{\lceil k / 2 \rceil - 1}$ and $Y$ into $\sim np /2 $ disjoint sets of size $(np/8)^{\lfloor k / 2 \rfloor - 1}$, each set being identified as a vertex in $H(X,Y)$. This partition is done in such a way that each partition class consists of the leaves of a tree rooted at one of the $\sim np/2$ vertices of $X_1$, and internal vertices of this tree are the neighbors of this root in $X_i$ for $2 \le i \le \lceil k / 2 \rceil - 1$ (see Figure \ref{fig:sparse}).

We say there is an edge in $H(X,Y)$ if there is at least one red edge between the corresponding sets of vertices in $G$. Observe that the edge probability is 
\[
q = 1-(1-p/2)^{(np/8)^{\lceil k / 2 \rceil - 1 + \lfloor k / 2 \rfloor - 1}} = 1-(1-p/2)^{(np/8)^{k-2}} \sim 1 - e^{-n^{k-2}p^{k-1}/(2\cdot8^{k-2})}.
\]
Since $n^{k-2}p^{k-1} = (np)^{k-1}/n \le 1$, we get by~\eqref{eq:ineqs} that 
\[
q \sim 1 - e^{-n^{k-2}p^{k-1}/(2\cdot8^{k-2})} \ge 1-(1-n^{k-2}p^{k-1}/(4\cdot8^{k-2})) = n^{k-2}p^{k-1}/(4\cdot8^{k-2}).
\]
We view $H(X,Y)$ as $\G(m,m, q)$ where $m = np/2$. The expected degree of $H(X,Y)$ is given by 
$$
\Theta(mq) = \Theta(n^{k-1}p^k) =\Theta((np)^k/n) \gg \log n = \Omega(\log m).
$$
Hence, by applying Lemma~\ref{lem:omega} with $m,q$ and $C=2k$ we get that there is an almost perfect matching of size $np/2(1+o(1))$ between the bipartition in $H(X,Y)$, with a failure probability of at most $1/ m^{2k}$. We follow the same process with $H(X',Y')$ to obtain an almost perfect matching of size $np/2(1+o(1))$. So altogether we've found $np(1+o(1))$ disjoint alternating paths between our choice of $u$ and $v$. But now taking the union bound over all choices of $u$ and $v$ gives us a total failure probability of at most 
$$
\binom{n}{2}  \cdot O\left(\frac{1}{n^3} + \frac{2}{(np)^{2k}} \right) \sim O\left(\frac{1}{n} + \frac{n^2}{(np)^{2k}}\right) = o(1),
$$
since $(np)^{2k} \gg (n\log n)^2$, by assumption.

Let us summarize what we have found. For any choice of $u$ and $v$, we build four tree structures: two rooted at $u$ and two rooted at $v$. The first level of these trees consists of $\sim np/2$ edges that are completely red or completely blue. Each vertex of this level then has several neighbors (of the opposite color than the first level) that are disjoint from the rest of the tree structures. This pattern continues for each of these neighbors until we have the desired length. Then, looking at the leaves that can be traced back to a single neighbor of $u$, we find at least one edge between these leaves and the leaves in the corresponding tree that can be traced back to a single neighbor of $v$. This gives us our desired $\sim np$ alternating paths between any $u$ and $v$.

Now we discuss how to prove \textbf{part~(\ref{thm:sparse:ii})}. We first assume $k \ge 5$. Here, we may follow the proof of the previous case by fixing a $u$ and $v$ and finding the sets $X_i, X_i'$ for $1 \le i \le \lceil k /2 \rceil$ and $Y_i, Y_i'$ for $1 \le i \le \lfloor k /2 \rfloor$.  
We can achieve this with a failure probability of $O(1/n^3)$.

We must use Lemma~\ref{lem:nbhs} again to find sets $Y_{\lfloor k / 2 \rfloor+1}$ and $Y_{\lfloor k / 2 \rfloor+1}'$ of order $(np/8) y_{\lfloor k / 2 \rfloor}$. 
Observe that for $k\ge 5$ we have
\[
|Y_{\lfloor k / 2 \rfloor+1}| = |Y_{\lfloor k / 2 \rfloor+1}'| = O((np)^{\lfloor k / 2 \rfloor+1}) = O(n^{ \frac{\lfloor k / 2 \rfloor+1}{k-1}}) \ll n,
\]
so we have enough room.
These sets take the role of $Y$ and $Y'$ in the previous part. We first deal with when $\ell = k + a$ for some integer $a > 2$. Here, we alter our construction: we still apply Lemma \ref{lem:nbhs}, which gives us that each vertex has $np/8$ correctly colored neighbors from $X_{\lceil k / 2 \rceil+1}$ to the remaining vertices. But instead, we let $X_{\lceil k / 2 \rceil+1}$ consist of just one vertex from each of the $np/8$-sized stars. Then $X_{\lceil k / 2 \rceil+1}$ has the same order as $X_{\lceil k / 2 \rceil}$. Our previous calculations allow us to continue in this manner, always being sure to alternate the color and accruing a failure probability of $1/n^3$ for each step, until we obtain $X_{\lceil k / 2 \rceil + a  - 2}$, which has order $x_{\lceil k / 2 \rceil}\ll n$.
Notice that if we find an appropriate matching between this set and $Y_{\lfloor k / 2 \rfloor+1}$ (or $Y_{\lfloor k / 2 \rfloor+1}$ depending on the color), this will give us $\sim np/2$ alternating paths of length $\ell = k+ a$.

So we may assume without loss of generality that $\ell = k + 2$. Then we build an auxiliary bipartite graph $H(X, Y)$ as before, but this time we partition $Y$ into $\sim np/2$ vertices of order $(np/8)^{\lfloor k / 2 \rfloor}$ and $X$ into $\sim np/2$ vertices of order $(np/8)^{\lceil k / 2 \rceil - 1}$. And since $n^{k-1}p^k = (np)^k/n\ge 1$, this allows us to say that the edge probability is 
\[
q = 1-(1-p/2)^{(np/8)^{k-1}} \ge 1-e^{-n^{k-1}p^k/(2\cdot 8^{k-1})} \ge 1 - e^{-1 /(2\cdot 8^{k-2})} = \Omega(1).
\]
Set $m=np/2$ and observe that clearly $\Theta(mq) \gg \log m$.
Hence, we may apply the same argument to find \whp~a matching of size $\sim np/2$ between $X$ and $Y$. Similarly for $H(X', Y')$. This gives us the desired $\sim np$ disjoint alternating paths between $u$ and $v$ of length $\ell = k + 2$. 

We still need to address the case $2\le k \le 4$. In these cases, we need to adjust our previous strategy to ensure that the sets in our construction are not too large. We do this first with $3\le k\le 4$. Here if $(n\log n)^{1/k} \ll np \le n^{1/(k-1)}$, then we proceed like in case (ii) so we have no need to create larger sets. Therefore, we may assume that $n^{1/k} \le np \le (n\log n)^{1/k}$. But now the additional sets we need to make are of order
\[
|Y_{\lfloor k / 2 \rfloor+1}| = |Y_{\lfloor k / 2 \rfloor+1}'| = O((np)^{\lfloor k / 2 \rfloor+1}) = O((n\log n)^{ \frac{\lfloor k / 2 \rfloor+1}{k}}) \ll n.
\]

Now we consider the case  $k=2$. Recall that since $p=o(1)$ we have $\sqrt{n} \le np \ll n$ and as before we may assume that $\sqrt{n} \le np \le \sqrt{n\log n}$.
We initialize $X_1, ~X_1', ~Y_1:=Y,~ Y_1':=Y'$ as before. Then we construct a set $X$ and $X'$ of order at most $n/6$ each. We apply Lemma ~\ref{lem:nbhs_2} with $A = X_1$,~ $B = V\setminus (X_1 \cup X_1'\cup Y_1 \cup Y_1')$, then $|A| \le np$ and $|B| \ge n - 4np(1+o(n)) \ge n / 2$. And since $\sqrt{n} \le np \le \sqrt{n\log n}$, we have that there exists an $1/(3p)$-matching between $A$ and $B$ with failure probability $1/n^3$. We call the image of this matching $X$, which has order at most $n / 3$. 
We do the same process with $X_1'$ to obtain $X'$. Then we apply the same strategy as before by constructing $H(X, Y)$ where we have $\sim np/2$ partition classes of size $1/(3p)$ (where each class consists of $1/(3p)$ vertices that are the image of a single vertex in $X$). We partition $Y$ as before. Then since $1/(3p) \ge 1$, the edge probability is 
\[
q = 1-(1-p/2)^{1/(3p)} \ge 1-e^{-1/6} = \Omega(1).
\]
And if we set $m=np/2$ then $\Theta(mq) \gg \log m$.
Hence, we may apply the same argument (using Lemma \ref{lem:omega} with $C = 5$) as before to find \whp~a matching of size $\sim np/2$ between $X$ and $Y$. Similarly for $H(X', Y')$.
If $\ell > 3$, then we apply the same strategy as above by using Lemma \ref{lem:nbhs_2} instead of \ref{lem:nbhs} to finding sets of the same size, before finally a matching between $X$ and $Y$ of the appropriate color.
We finish the proof by taking the union bound over all choices of $u$ and $v$. But again $\binom{n}{2}O(1/n^3 + 1/(np)^{5}) = O(1/n + 1/n^{1/2})= o(1)$ giving us the desired number of alternating paths in this case. \

We now turn our attention to \textbf{part~(\ref{thm:sparse:iii})}. For the upper bound, note that \whp~the number of paths of length $k$ is $O(n^{k+1}p^k)$. Indeed, 
choose an arbitrary vertex $v_0$ and build a copy of $P_{k+1}$ greedily choosing next vertex $v_1$ from $N(v_0)$, $v_2\in N(v_1)$, etc. Since for every vertex $v$, \whp~$|N(v)|\sim np$, the number of all paths of length~$k$ is $O(n (np)^k) = O(n^{k+1}p^k)$ and so there is a pair of vertices with at most $O(n^{k+1}p^k) / \binom{n}{2} = O(n^{k-1}p^k)$ paths.

Now for the lower bound. Fix two vertices $u$ and $v$. We continue as before and again search by applying Lemma~\ref{lem:nbhs} for $X_i, X_i'$ for $1 \le i \le k - 1$. (The assumptions of Lemma~\ref{lem:nbhs} are still satisfied since the last time we applied this lemma to $|A| = O((np)^{k-2})$ and $|B| \sim n$.)
Again, we can achieve this with a failure probability of $O(1/n^3)$. As before, we construct stars $H(X,\{v\})$ and $H(X', \{v\})$ by partitioning $X$ into $\sim np/2$ subsets of order $\Theta((np)^{k-2} )$ and $X'$ also into $\sim np/2$ subsets of order $\Theta((np)^{k-2} )$. Thus, since $p (np)^{k-2} = (np)^{k-1}/n = o(1)$ by assumption,  the edge probability is given by
\[
q = 1-(1-p/2)^{\Theta((np)^{k-2})} = \Theta( n^{k-2}p^{k-1}).
\]
Now, set $m=\Theta(np)$ and observe that the number of edges in $H(X,\{v\})$ as well as in $H(X',\{v\})$ has binomial distribution $\bin(m,q)$ with the expected value 
\[
\mu = mq = \Theta(np \cdot n^{k-2}p^{k-1}) = \Theta((np)^k/n) \gg \log n,
\]
by assumption. Thus, the number of edges in these stars is at least $\mu/2$ with probability at least $1-e^{-\Omega(\mu)}$. Since $\mu \gg \log n$, this probability suffices to overcome the union bound of $\binom{n}{2}$ choices, completing the proof.
\end{proof}

\section{Remarks and further directions}

Here we provide two preliminary results that suggest further directions for study. One involves the parameter $\lambda_{r,\ell}(G)$, introduced in~\cite{BDL}, in which we relax the requirement from $\kappa_{r,\ell}(G)$ that alternating paths between vertices be disjoint. The second result is a pseudorandom analog of Theorem~\ref{thm:kappa-dense}.

\subsection{Not necessarily disjoint paths}

Removing the restriction that alternating paths in $G$ be internally disjoint gives us the number $\lambda_{r,\ell}(G)$, which is the maximum $t$ such that there is an $r$-coloring of the edges of $G$ such that any pair of vertices is connected by $t$ alternating paths of length~$\ell$.
A few results for this number were obtained in \cite{BDL} where it was shown that
\[
\lambda_{2,3}(K_{m,n}) \sim  mn/4
\quad\text{ and }\quad
\lambda_{2,4}(K_{m,n}) \sim  m^2n/8.
\]
Determining $\lambda_{2,\ell}(K_{m,n})$ for general $\ell$ seems to be not an easy problem.
Here we provide the following result for general $G$ and $\ell=3$.

\begin{proposition}\label{prop:lambda}
Let $G$ be a $d$-regular graph. Then $\lambda_{2,3}(G) \le d^3/(4(n-1))$.
\end{proposition}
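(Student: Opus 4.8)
The plan is to bound the \emph{total} number of alternating paths of length three, summed over all pairs, and then divide by $\binom{n}{2}$. If $c$ is a $2$-coloring witnessing $\lambda_{2,3}(G)$, then each of the $\binom{n}{2}$ pairs is joined by at least $\lambda_{2,3}(G)$ alternating paths of length three, so the total number $P$ of such paths (each counted once, for its pair of endpoints) satisfies $P \ge \lambda_{2,3}(G)\binom{n}{2}$. Hence it suffices to prove $P \le d^3 n/8$ for \emph{every} $2$-coloring, since $\tfrac{d^3 n/8}{\binom{n}{2}} = \tfrac{d^3}{4(n-1)}$.

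First I would overcount paths by alternating \emph{walks}. Write $r_v$ and $b_v$ for the numbers of red and blue edges at $v$, so $r_v + b_v = d$. An alternating path of length three has its two end-edges of one color and its middle edge of the other; specifying the middle edge $\{x,y\}$ (of color $c$) together with a neighbor of $x$ and a neighbor of $y$ along edges of the opposite color produces every such path (and possibly some degenerate walks). Counting ordered walks and halving gives
\[
P \le f := \sum_{\{x,y\}\in E_R} b_x b_y + \sum_{\{x,y\}\in E_B} r_x r_y ,
\]
where $E_R,E_B$ are the red and blue edge sets. So the problem reduces to showing $f \le d^3 n/8$ for every $2$-coloring.

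The key step is a change of variables. Put $\delta_v := b_v - r_v$, so $b_v = (d+\delta_v)/2$ and $r_v = (d-\delta_v)/2$; expansion yields the identities $b_xb_y + r_xr_y = \tfrac12(d^2 + \delta_x\delta_y)$ and $b_xb_y - r_xr_y = \tfrac{d}{2}(\delta_x + \delta_y)$. Let $h := \sum_{E_R} r_xr_y + \sum_{E_B} b_xb_y$ denote the companion count. Summing the first identity over all edges gives $f + h = \tfrac{d^2|E|}{2} + \tfrac12\sum_{\{x,y\}\in E}\delta_x\delta_y$, while summing the second over red edges minus blue edges and using $\sum_{\{x,y\}\in E_R}(\delta_x+\delta_y) = \sum_v r_v\delta_v$ (and its blue analogue) gives $f - h = -\tfrac{d}{2}\sum_v \delta_v^2$. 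Adding these and using $|E| = dn/2$ yields
\[
f = \frac{d^3 n}{8} + \frac14 \sum_{\{x,y\}\in E}\delta_x\delta_y - \frac{d}{4}\sum_{v}\delta_v^2 .
\]

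To finish, I would show the two correction terms are nonpositive. By AM--GM on each edge, $\delta_x\delta_y \le \tfrac12(\delta_x^2+\delta_y^2)$, and since $G$ is $d$-regular each $\delta_v^2$ occurs exactly $d$ times in the edge-sum, so $\sum_{\{x,y\}\in E}\delta_x\delta_y \le \tfrac{d}{2}\sum_v \delta_v^2 \le d\sum_v \delta_v^2$; hence the last two terms sum to at most $0$ and $f \le d^3 n/8$, as needed. I expect the main obstacle to be discovering the identity for $f$: the substitution $\delta_v = b_v - r_v$ is exactly what reveals that the constant $d^3n/8$ is the value at the balanced coloring ($\delta \equiv 0$), and that any deviation can only decrease $f$ (equivalently, that the alternating count $f$ never exceeds its companion $h$). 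Everything after spotting this is routine $d$-regularity bookkeeping and a one-line convexity estimate.
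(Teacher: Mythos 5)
Your proof is correct, and it takes a genuinely different route from the paper's. The outer frame is shared: both arguments bound $\lambda_{2,3}(G)\binom{n}{2}$ by the middle-edge count $f=\sum_{\{x,y\}\in E_R}b_xb_y+\sum_{\{x,y\}\in E_B}r_xr_y$ and reduce to showing $f\le d^3n/8$ for every $2$-coloring (you are in fact slightly more careful than the paper in noting that $f$ only \emph{upper-bounds} the number of genuine paths, since the two end-vertices of a configuration may coincide; the paper asserts equality, which is harmless for an upper bound). The difference is in how $f\le d^3n/8$ is proved. The paper sorts the red degree sequence, applies the rearrangement inequality twice to obtain $f\le\frac12\sum_i r_i^2b_i+\frac12\sum_i b_i^2r_i=\frac{d}{2}\sum_i r_ib_i$, and then maximizes at $r_i=b_i=d/2$. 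You instead substitute $\delta_v=b_v-r_v$ and, with the companion count $h=\sum_{\{x,y\}\in E_R}r_xr_y+\sum_{\{x,y\}\in E_B}b_xb_y$, derive the exact identity
\[
f=\frac{d^3n}{8}+\frac14\sum_{\{x,y\}\in E}\delta_x\delta_y-\frac{d}{4}\sum_{v}\delta_v^2,
\]
whose ingredients all check out: $b_xb_y+r_xr_y=\frac12(d^2+\delta_x\delta_y)$, $b_xb_y-r_xr_y=\frac{d}{2}(\delta_x+\delta_y)$, and $\sum_{\{x,y\}\in E_R}(\delta_x+\delta_y)=\sum_v r_v\delta_v$ give $f-h=-\frac{d}{2}\sum_v\delta_v^2$ as you claim, and then $\delta_x\delta_y\le\frac12(\delta_x^2+\delta_y^2)$ together with $d$-regularity yields $\sum_{\{x,y\}\in E}\delta_x\delta_y\le\frac{d}{2}\sum_v\delta_v^2$, making the correction terms nonpositive. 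Comparing the two: the rearrangement argument is inequality-first and delivers only the bound, while your identity is sharper and more informative --- it gives $f\le\frac{d^3n}{8}-\frac{d}{8}\sum_v\delta_v^2$, quantifying how any imbalance in the coloring strictly decreases the count, and it shows $f\le h$ unconditionally. Both proofs use regularity only through $r_v+b_v=d$ and $|E|=dn/2$, so your argument should also perturb to the almost-regular setting mentioned after the proposition just as readily as the paper's does.
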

\begin{proof}
Let the edges of $G$ be 2-colored. Then we claim that the number of all alternating paths of length 3 is at most $nd^3/8$, which will yield the result since
$$
\binom n2 \lambda_{2,3}(G) \le nd^3/8.
$$

Let $G$ be a $d$-regular graph with $G = ([n], E)$ and 2-colored edges. Then let $E = R \cup B$ where $R$ and $B$ are the preimage of red and blue under $c$, respectively. Then we have a red and a blue degree sequence
\begin{align*}
r_1 \le r_2 \le\ldots \le r_n,\\
b_1 \ge b_2 \ge \ldots \ge b_n,
\end{align*}
where $r_i, b_i = d-r_i$ is the red and blue degree of vertex $i$, respectively (under a possible reordering of the vertices). Thus the total number of alternating paths of length $3$ in $G$ is 
\begin{align*}
\sum_{ij\in B} r_i r_j + \sum_{k\ell\in R} b_k b_\ell.
\end{align*}

We claim that this is at most $$\sum_{i\in [n]} \left(r_i^2 \cdot \frac{b_i}{2}+ b_i^2 \cdot \frac{r_i}{2}\right).$$

To do this, we use the following version of the rearrangement inequality (see, for example, \cite{HPL}): For $x_1 \le x_2 \le x_3 \le \ldots \le x_n$ and $y_1 \le y_2 \le y_3 \le \ldots \le y_n$ and any permutation $\pi$ of $[n]$, we have 
$$
\sum_{i=1}^n x_i y_{\pi(i)} \le \sum_{i=1}^n x_i y_i.
$$

\noindent Define $B(i) := \lbrace j \in [n]\setminus \lbrace i\rbrace : ij \in B \rbrace$.
Note that $|B(i)| = b_i$. 
$$
\sum_{ij \in B} r_i r_j \le \frac12 \sum_{i=1}^n \sum_{j \in B(i)} r_i r_j.
$$
We now think of this last sum as $\sum_{i=1}^{m} x_i y_{\pi(i)}$ where $m = b_1+b_2+\ldots+b_n$ and 
\begin{align*}
x_1 = x_2 = x_3 = \ldots = x_{b_1} &:= r_1 \\
x_{b_1 + 1} = x_{b_1+2} = x_{b_1+3} := \ldots = x_{b_1 + b_2} &:= r_2 \\
&\ldots \\
x_{b_1 + \ldots + b_{n-1} + 1} = \ldots = x_{m} &:= r_n, 
\end{align*}
\begin{align*}
y_1 = y_2 = y_3 = \ldots = y_{b_1} &:= r_1 \\
y_{b_1 + 1} = y_{b_1+2} = y_{b_1+3} := \ldots = y_{b_1 + b_2} &:= r_2 \\
&\ldots \\
y_{b_1 + \ldots + y_{n-1} + 1} = \ldots = y_{m} &:= r_n. 
\end{align*}

And observe that $x_1 \le x_2 \le x_3 \le \ldots \le x_m$ and $y_1 \le y_2 \le y_3 \le \ldots \le y_m$ since $r_1 \le r_2\le \ldots \le r_n$. 

But since for $1\le i \le n$ we have that $r_i$ appears in exactly $b_i$ of the sums $\sum_{j\in B(i)} r_j$, then there exists a permutation of $[m]$ that takes the ordering of the indices we get in $\sum_{i=1}^n \sum_{j \in B(i)} r_i r_j$ to $[m]$. Let $\pi$ be the inverse of this permutation. Then by the rearrangement inequality, 
$$
\frac12 \sum_{i=1}^n \sum_{j \in B(i)} r_i r_j = \frac12 \sum_{i=1}^m  x_i y_{\pi(i)} \le \frac12 \sum_{i=1}^m x_i y_i = \frac12\sum_{i=1}^n r_i^2 b_i
$$

We apply the same argument to the $\sum_{k\ell \in R} b_k b_\ell$ with 
\begin{align*}
x_1 = x_2 = x_3 = \ldots = x_{r_n} &:= b_n \\
x_{r_n + 1} = x_{r_n+2} = x_{r_n+3} := \ldots = x_{r_n + r_{n-1}} &:= b_{n-1} \\
&\ldots \\
x_{r_n + \ldots + r_{2} + 1} = \ldots = x_{m'} &:= b_1 
\end{align*}
and
\begin{align*}
y_1 = y_2 = y_3 = \ldots = y_{r_n} &:= b_n \\
y_{r_n + 1} = y_{r_n+2} = y_{r_n+3} := \ldots = y_{r_n + r_{n-1}} &:= b_{n-1} \\
&\ldots \\
y_{r_n + \ldots + r_{2} + 1} = \ldots = y_{m'} &:= b_1, 
\end{align*}
where $m' = r_n + \ldots + r_1$. Then  $x_1 \le x_2 \le x_3 \le \ldots \le x_m$ and $y_1 \le y_2 \le y_3 \le \ldots \le y_m$ since $b_n \le b_{n-1}\le \ldots \le b_1$. 

So we obtain
\[
\sum_{ij\in B} r_i r_j + \sum_{k\ell\in R} b_k b_\ell \le \sum_{i\in [n]} \left(r_i^2 \cdot \frac{b_i}{2}+ b_i^2 \cdot \frac{r_i}{2}\right)
= \sum_{i\in [n]} \frac{r_ib_i}{2}\left(r_i+b_i\right)
= \frac d2\sum_{i\in [n]} r_ib_i.
\]
But this sum is maximized when $r_i = b_i = d/2$ so the total number of alternating paths of length three at most $\frac{d^3n}{8}$.
\end{proof}

It is not difficult to show that the random two-coloring of the edges of $G=\G(n,p)$ implies that \whp~$\lambda_{2,3}(G) \ge (1+o(1))n^2p^3/8$ for $np \gg (n \log n)^{1/3}$. This together with a slightly modified proof of Proposition~\ref{prop:lambda} (where the $d$-regular requirement is replaced by almost $d$-regular) shows that \whp~$\lambda_{2,3}(G) \sim n^2p^3/8$.
It is plausible to believe that the random two-coloring of $\G(n,p)$ always maximizes the parameter $\lambda_{2,\ell}$ for any $\ell\ge 4$.

\subsection{Alternating paths of length at least three in pseudorandom graphs}

We say $G$ is \emph{$d$-pseudorandom} if all degrees are $\sim d$ and all codegrees are $\sim d^2/n$, for $d \gg n^{1/2}$. 
Recall that we have already determined the alternating connectivity for $d$-pseudorandom graphs when we have paths of length two (see Theorem~\ref{thm:kappa_r2_dreg}). 
More generally, we say that $G$ is a \emph{$(n, d, \lambda)$-pseudorandom} if it has $n$ vertices, all vertices have degree $\sim d$, and all eigenvalues except the largest have absolute value at most $\lambda$. It is well known that any $d$-pseudorandom graph $G$ is an $(n, d, o(d))$-graph.

The interested reader can verify that the following result about $\kappa_{r,\ell}$(G) for $G$ a $d$-pseudorandom.

\begin{theorem}\label{thm:kappa_2l}
Suppose $G$ is a $(n, d, \lambda)$-graph, and $\lambda \ll d^2/n$. Then for all fixed $\l \ge 3$ we have 
\[
\kappa_{r,\ell}(G)\sim \min\left\{ \frac{n}{\ell -1}, d - \frac{d^2}{2n}\right\}.
\]
\end{theorem}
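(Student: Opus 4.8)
The plan is to mirror the proof of Theorem~\ref{thm:kappa-dense} almost verbatim, replacing its two probabilistic inputs --- the matching Lemma~\ref{lem:matchings} and the colored-size Lemma~\ref{lem:sizes} --- by deterministic pseudorandom analogs that follow from the expander mixing lemma. Recall that for an $(n,d,\la)$-graph one has $(A^2)_{uv} = d^2/n + O(\la^2)$ for the codegree of any pair $u,v$, and $e(S,T) = \frac dn |S||T| + O(\la\sqrt{|S||T|})$ for any two sets. Since $\la \ll d^2/n \ll d$ (using $d\gg\sqrt n$), every vertex has degree $\sim d$, every pair of vertices has codegree $\sim d^2/n$, and in particular $|N(x)\cup N(y)| \sim 2d - d^2/n$ for every pair $x,y$. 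This immediately yields the upper bound: the trivial bound $n/(\ell-1)$ from counting internal vertices, together with the observation that each length-$\ell$ path ($\ell\ge3$) from $x$ to $y$ consumes one vertex of $N(x)$ and one of $N(y)$, so the number of internally disjoint such paths is at most $\tfrac12|N(x)\cup N(y)| \sim d - d^2/(2n)$.

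For the lower bound I first need a \emph{colored} matching lemma playing the role of Lemma~\ref{lem:matchings}: after $2$-coloring $E(G)$ uniformly at random, between any two disjoint linear-sized sets $A,B$ there is \whp~a matching of a prescribed color of size $\sim\min\{|A|,|B|\}$. The cleanest route is to observe that the red (resp. blue) subgraph $G_R$ is itself pseudorandom: writing $A_R = \tfrac12 A + (A_R-\tfrac12 A)$ and bounding the mean-zero random part by matrix concentration gives $\|A_R-\tfrac12 A\| = O(\sqrt d)$ \whp, so $G_R$ is an $(n, d/2, o(d))$-graph. Applying the expander mixing lemma to $G_R$, every pair of sets of size $\Omega(n)$ spans $\Omega(dn)$ red edges; a K\"onig/defect-Hall argument then forces a red matching missing only $o(n)$ vertices of the smaller side. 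I would also record the analog of Lemma~\ref{lem:sizes}: by Chernoff over the random coloring and a union bound over pairs (valid once $d^2/n \gg \log n$), \whp~$|N_R(u)\setminus N(v)| \sim \tfrac d2 - \tfrac{d^2}{2n}$ and $|N_{RB}(u,v)| \sim \tfrac{d^2}{4n}$ for all $u,v$, so the four sets $X_B,X_R,Y_B,Y_R$ defined exactly as before each have size $\sim \tfrac d2(1-\tfrac d{2n})$.

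With these two tools in hand, the lower bound construction is identical to Lemmas~\ref{lem:kappa_2ell:c1} and~\ref{lem:kappa_2ell:c2}, split according to which term of the minimum is smaller. Setting $s \sim n - |N(x)\cup N(y)| \sim n - 2d + d^2/n$, the arithmetic identities~\eqref{eq:dense:1}--\eqref{eq:dense:3} and~\eqref{eqn:line12} hold verbatim under the substitution $np(1-p/2)\mapsto d - d^2/(2n)$; one carves out the chains $\{W_i\},\{Z_i\}$ (and $\{W_i'\},\{Z_i'\}$ when $n/(\ell-1)$ is the binding constraint) of the prescribed sizes from $S$ and from the leftover neighborhoods, and links consecutive sets by prescribed-color near-perfect matchings from the colored matching lemma, yielding $\sim\min\{n/(\ell-1),\,d-d^2/(2n)\}$ internally disjoint alternating $xy$-paths.

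The main obstacle is the colored matching lemma. In the $\G(n,p)$ setting Lemma~\ref{lem:matchings} exploited edge-independence to run a Hall-condition union bound directly; here the edges of $G$ are fixed and only the coloring is random, so I must instead transfer pseudorandomness to each color class. The delicate point is the matrix-concentration bound $\|A_R - \tfrac12 A\| = O(\sqrt d)$, which needs $d \gg \log n$ to hold \whp; combined with $\la \ll d$ this keeps $G_R$ pseudorandom and makes the expander mixing lemma applicable color-by-color. A secondary technical caveat is that the codegree estimate $d^2/n + O(\la^2)$ gives the sharp asymptotic $\sim d^2/n$ only when $\la \ll d/\sqrt n$; since the stated hypothesis is $\la \ll d^2/n$, one either strengthens the hypothesis at this step or invokes the $d$-pseudorandom codegree assumption directly.
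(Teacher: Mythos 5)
Your proposal follows the same route as the paper's (admittedly sketched) proof: repeat the construction of Theorem~\ref{thm:kappa-dense} after a uniformly random $2$-coloring, with the dense matching Lemma~\ref{lem:matchings} replaced by the Expander-Mixing-Lemma-based Lemma~\ref{lem:match}. You correctly supply two things the paper leaves implicit. First, Lemma~\ref{lem:match} as stated concerns the \emph{uncolored} graph, and your observation that each color class is itself an $(n,d/2,o(d))$-graph via the norm bound $\|A_R-\tfrac12 A\|=O(\sqrt d)$ is a clean way to make it apply color-by-color; it also handles the adaptivity issue (the sets $W_i,Z_i$ are chosen after the coloring) automatically, since once the norm bound holds the mixing estimate is deterministic and simultaneous over all sets. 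Second, your codegree caveat is a genuinely sharp catch: $\la\ll d^2/n$ alone controls individual codegrees only through $|(A^2)_{uv}-d^2/n|\le\la^2$, which is useful only when $\la\ll d/\sqrt n$; since the hypotheses force $d\gg n^{2/3}$ (as $\la=\Omega(\sqrt d)$ whenever $d\le(1-\eps)n$), we have $d^2/n\gg d/\sqrt n$, so the codegree condition from the surrounding definition of $d$-pseudorandomness must indeed be assumed, exactly as you say, for both the upper bound ($|N(x)\cup N(y)|\sim 2d-d^2/n$) and the sizes of $X_B,X_R,Y_B,Y_R$.

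One calibration in your colored matching lemma needs fixing, though the fix lives entirely inside your own framework. You state the lemma for ``linear-sized'' sets with defect $o(n)$, but the theorem's range includes $n^{2/3}\ll d\ll n$, where the binding term is $d-d^2/(2n)\sim d$ and the chains $W_i,Z_i$ have size $\Theta(d)=o(n)$; a lemma confined to $\Omega(n)$-sized sets cannot link them, and ``missing $o(n)$ vertices'' is vacuous at that scale. This is precisely why the paper's Lemma~\ref{lem:match} is calibrated as $m\gg(\la/d)n$ --- note that the hypothesis $\la\ll d^2/n$ is exactly the statement $(\la/d)n\ll d$. Your mechanism survives at this scale provided you insist on the sharp bound $\|A_R-\tfrac12 A\|=O(\sqrt d)$ (available once $d\gg \log^4 n$, say) rather than the matrix-Bernstein bound $O(\sqrt{d\log n})$, which can fail the requirement $\la_R\ll d^2/n$ near the boundary $d\approx n^{2/3}$: with the sharp bound, $\la_R=O(\la+\sqrt d)\ll d^2/n$ because $\sqrt d\ll d^2/n$ if and only if $d\gg n^{2/3}$, and then the defect-Hall argument of Lemma~\ref{lem:match} applied to $G_R$ yields monochromatic matchings between any disjoint sets of size $m\gg(\la_R/d)n$ with defect $o(m)$. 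With the lemma restated at that scale, your argument matches the paper's intended proof.
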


Here we follow the same strategy as the proof of Theorem~\ref{thm:kappa-dense} by first coloring each edge randomly and independently and then finding appropriate disjoint sets and colored matchings between them. 
The main difference is that instead of Lemma~\ref{lem:matchings} one can use its analog, given below.

\begin{lemma}\label{lem:match}
Let $G$ be an $(n, d, \lambda)$-graph with $\lambda = o(d)$ and let $A, B$ be disjoint sets of $m \gg (\lambda / d) n$ vertices each. Then $G$ has a matching from $A$ to $B$ containing $\sim m$ edges. 
\end{lemma}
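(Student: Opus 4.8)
The plan is to derive the lemma from two standard tools: the expander mixing lemma for $(n,d,\la)$-graphs and the deficiency (König) version of Hall's theorem. Recall that the expander mixing lemma asserts that in an $(n,d,\la)$-graph, for any two vertex sets $S$ and $T$ one has
\[
\left| e(S,T) - \frac{d}{n}|S||T| \right| \le \la\sqrt{|S||T|}.
\]
Since the desired matching lives in the bipartite graph $G[A,B]$ consisting of the $G$-edges with one endpoint in $A$ and the other in $B$, König's theorem tells us that the maximum matching from $A$ to $B$ has size exactly $m - \operatorname{def}(A)$, where $\operatorname{def}(A) = \max_{S\subseteq A}\big(|S| - |N(S)\cap B|\big)$. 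Thus it suffices to show that $\operatorname{def}(A) = o(m)$.

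The heart of the argument is a contradiction-style estimate on the deficiency. Suppose $\operatorname{def}(A) \ge \delta m$ for some $\delta > 0$, achieved by a set $S \subseteq A$, so that $|N(S)\cap B| \le |S| - \delta m$. Set $T = B \setminus N(S)$, so that by construction there are no $G$-edges between $S$ and $T$, i.e.\ $e(S,T) = 0$. The key bookkeeping observation is that $\delta m$ forces both $S$ and $T$ to be large: from $|N(S)\cap B| \ge 0$ we get $|S| \ge \delta m$, and since $|N(S)\cap B| \le m - \delta m$ we get $|T| = m - |N(S)\cap B| \ge \delta m$. Hence $|S||T| \ge \delta^2 m^2$.

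Now I feed this into the expander mixing lemma. Because $e(S,T) = 0$, the lemma gives $\tfrac{d}{n}|S||T| \le \la\sqrt{|S||T|}$, whence $\sqrt{|S||T|} \le \la n / d$, that is $|S||T| \le (\la n/d)^2$. Combining with $|S||T| \ge \delta^2 m^2$ yields $\delta m \le \la n/d$, so $\delta \le \frac{\la n}{d\, m}$. The hypothesis $m \gg (\la/d)n$ makes the right-hand side $o(1)$, so $\delta = o(1)$ and therefore $\operatorname{def}(A) = o(m)$. Consequently the maximum matching from $A$ to $B$ has size $m - o(m) \sim m$, as claimed.

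I expect the only delicate point to be the middle step: one must check that a large deficiency genuinely produces a pair $(S,T)$ of empty-bipartite sets that are \emph{both} of size $\Omega(m)$, since it is precisely the product lower bound $|S||T| \gtrsim m^2$ that clashes with the mixing upper bound $(\la n/d)^2 = o(m^2)$. If one only controlled one of the two sides, the argument would not close. Everything else is routine: the mixing lemma is applied as a black box, and the passage from zero deficiency to a near-perfect matching is the standard König/Hall reduction. A minor remark worth including is that the same estimate shows that in fact the uncovered part of $A$ has size $O\!\big((\la/d)n\big)$, which is $o(m)$ under the stated hypothesis, giving the $\sim m$ conclusion without any additional slack.
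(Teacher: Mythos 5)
Your argument is correct and is essentially the paper's own proof: both arguments take a deficiency-witnessing set $S$, put $T = B\setminus N(S)$ so that $e(S,T)=0$ while $|S|,|T|\ge \delta m$ forces $|S||T|\ge \delta^2 m^2$, and play this off against the Expander Mixing Lemma bound $|S||T| = O\left((\lambda n/d)^2\right)$ to conclude $\delta = o(1)$, yielding a matching of size $\sim m$ via the defect form of Hall/K\"onig (which the paper leaves implicit but you state explicitly). The one cosmetic discrepancy is that you quote the mixing lemma in its clean $d$-regular form, whereas the paper's $(n,d,\lambda)$-graphs are only approximately $d$-regular, so its inequality carries an extra $o\left(d|S||T|/n\right)$ error term; this is absorbed by a constant factor (the paper takes $\delta = 2\lambda n/(dm)$), and your bound $\delta \le \lambda n/(dm)$ should be read as $\delta \le (1+o(1))\lambda n/(dm)$, which changes nothing in the conclusion.
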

\begin{proof}
The proof is based on an easy application of the Expander Mixing Lemma~\cite{AC} that asserts that if $G$ is an $(n, d, \lambda)$-graph, then for any $S, T \subseteq V(G)$ we have
\begin{equation}\label{eq:eml}
\left| e(S, T) - \frac{d|S||T|}{n} \right| \le \lambda \sqrt{|S||T|} + o\of{\frac{d|S||T|}{n}}.
\end{equation}
Let $\delta = \frac{2\lambda n}{d m}$. By assumption, $\delta =o(1)$. Notice that 
\[
|B \setminus N(S)|= m - |N(S) \cap B| \ge (1+\delta)m - |S|
\]
and $e(S,B \setminus N(S))=0$.
But by~\eqref{eq:eml}, we must have $e(S, T) >0$ whenever $|S||T| \ge 2 \bfrac{\lambda}{d}^2 n^2$. Letting $T = B \setminus N(S)$, we get 
\[
|S||T| \ge |S|[(1+\delta)m - |S|]\ge \delta^2 m^2 = 4\bfrac{\lambda}{d}^2 n^2
\]
which is a contradiction. 
\end{proof}

Here we have an answer for $d \gg (\lambda n)^{1/2}$.  The main obstacle in obtaining an analogous result for smaller $d$ is that we rely on the Expander Mixing Lemma which is ``too strong" in the sense that it is a statement about all sets of vertices $S, T$, and this comes at a price of being ``too weak" in the error term (the little-o term in \eqref{eq:eml}). This is in contrast to random graphs where we are able to handle the sparser cases because we rely only on a similar statement about relatively few (order $n^2$) pairs of sets $S, T$. For $(n, d, \lambda)$-graphs we do not have any analogous tool, i.e. one that has a smaller error term  and still tells us what we need to know about the specific sets $S, T$ that concern our proof. 

\subsection{Remark on the windows used in the sparse case}

Here we comment on the windows of $p$ used in Theorem \ref{thm:kappa-sparse} for parts (\ref{thm:sparse:i}) and (\ref{thm:sparse:iii}). By stipulating that $(n\log n)^{1/k} \ll np$, we avoid when the diameter is changing. Analyzing $\kappa_{r,l}(G)$ for this range seems to require different strategies than the ones we have used here. Thus, we have small gaps where we do not know what is happening. Further, although we have determined the order of magnitude in  (\ref{thm:sparse:iii}), we still leave open the exact constant for this range. An avenue for further work, then, would be to make this result more precise in both of these areas.

\end{document}